\numberwithin{equation}{section}
\newtheorem{theorem}{Theorem}[section]
\newtheorem{corollary}[theorem]{Corollary}
\newtheorem{remark}[theorem]{Remark}
\newtheorem{lemma}[theorem]{Lemma}
\newtheorem{proposition}[theorem]{Proposition}
\newtheorem{example}[theorem]{Example}
\newcommand{\dx}{\mathrm{d}x}
\renewcommand{\Re}{\mathop{\rm Re}\nolimits}
\renewcommand{\Im}{\mathop{\rm Im}\nolimits}
\newcommand{\ta}{\tilde a}
\newcommand{\vk}{\varkappa}
\newcommand{\vp}{\varphi}
\newcommand{\mC}{\mathcal C}
\newcommand{\be}{\begin{equation}}
\newcommand{\ee}{\end{equation}}
\newcommand{\bb}{\mbox{\boldmath$\beta$}}
\newcommand{\R}{{\mathbb R}}
\newcommand{\C}{{\mathbb C}}
\newcommand{\IP}{{\bf P}}
\newcommand{\Z}{{\mathbb Z}}
\newcommand{\E}{{\bf E}}
\newcommand{\T}{{\mathbb T}}
\newcommand{\N}{{\mathbb N}}
\newcommand{\PP}{{\bf P}}
\newcommand{\hA}{\widehat {A}_V}
\newcommand{\ai}{a}
\newcommand{\TTT}{{T}^d}
\newcommand{\fla}{\lan\lan f\ran\ran_\Lambda}
\newcommand{\cA}{{\mathcal A}}
\newcommand{\cD}{{\mathcal D}} 
\newcommand{\cF}{{\mathcal F}}
\newcommand{\cQ}{{\mathcal Q}}
\newcommand{\cP}{{\mathcal P}}
\newcommand{\cY}{{\mathcal Y}}
\newcommand{\strela}{\rightharpoonup}
\def\12{\tfrac12}
\def\S{\mathhexbox278}
\def\lan{\langle}
\def\ran{\rangle}
\def\eps{\varepsilon}
\author[Huang]{ HUANG Guan}
\author[Kuksin]{ KUKSIN Sergei}
\author[Maiocchi]{ MAIOCCHI Alberto}
\title[Time-averaging  for weakly nonlinear CGL equations ]{ Time-averaging  for weakly nonlinear CGL equations with  arbitrary 
potentials
}
\begin{document}

\begin{abstract}
 Consider   weakly nonlinear complex Ginzburg--Landau (CGL) equation of the form:
     \[u_t+i(-\triangle u+V(x)u)=\epsilon\mu\Delta u+\epsilon \mathcal{P}( \nabla u, u),\quad x\in {\R^d}\,,
     \eqno{(*)}\]
under the periodic boundary conditions, 
     where $\mu\geqslant0$ and $\mathcal{P}$ is a smooth function. 
     Let $\{\zeta_1(x),\zeta_2(x),\dots\}$ be the $L_2$-basis formed by  eigenfunctions of the operator $-\triangle +V(x)$. For a complex function $u(x)$,  write it as \mbox{$u(x)=\sum_{k\geqslant1}v_k\zeta_k(x)$} 
     and set $I_k(u)=\frac{1}{2}|v_k|^2$. Then for any solution $u(t,x)$ of the linear equation
      $(*)_{\epsilon=0}$ we have $I(u(t,\cdot))=const$. In this work it is  proved that        if equation $(*)$  with a sufficiently smooth real potential $V(x)$ is well posed on time-intervals $t\lesssim \epsilon^{-1}$, then  for any its solution  $u^{\epsilon}(t,x)$,  the limiting behavior of the curve  $I(u^{\epsilon}(t,\cdot))$ on  time intervals of order $\epsilon^{-1}$, as $\epsilon\to0$, can be uniquely characterized by a solution of a certain well-posed   effective equation:
     \[u_t=\epsilon\mu\triangle u+\epsilon F(u),\]
     where $F(u)$ is a resonant averaging of the nonlinearity $\mathcal{P}(\nabla u, u)$. We also prove similar results for the 
     stochastically perturbed equation, when a white in time and smooth in $x$ random
      force of order $\sqrt\epsilon$ is added to the right-hand side of the equation. 
      
      The approach of this work is rather general. In particular, it applies to equations 
      in bounded domains in $\R^d$ under Dirichlet boundary conditions. 
\end{abstract}

\hfill{\it Dedicated to Walter Craig on his 60-th
birthday } \medskip

\bibliographystyle{plain}
\maketitle

\section{introduction}
\noindent {\it Equations.\/}
We consider a weakly nonlinear CGL equation on a rectangular $d$-torus 
${\TTT}=\mathbb{R}/(L_1\Z)\times\mathbb{R}/(L_2\Z)\times\dots\times\mathbb{R}/(L_d\Z)$,
$L_1,\dots,L_d>0$,
\begin{equation}\label{m-eq}
u_t+i(-\Delta+V(x))u=\epsilon\mu\Delta u+\epsilon\mathcal{P}(\nabla u, u),\quad u=u(t,x),\; x\in{\TTT},\end{equation}
where $\mu\geqslant0$,  $\mathcal{P}: \mathbb{C}^{d+1}\to \mathbb{C}$ is a $C^{\infty}$-smooth function,  $\epsilon$ is a small parameter and $V(\cdot)\in C^n({\TTT})$ is a sufficiently smooth real-valued function on ${\TTT}$ (we will assume that $n$ is large enough). {If $\mu=0$, then the nonlinearity $\mathcal{P}$ should be 
  independent of the derivatives of the unknown function $u$. For simplicity, we assume that  $\mu>0$. The case $\mu=0$ can be treated exactly in the same way (even simpler).}

 For any $s\in\mathbb{R}$ we denote by $H^s$ the Sobolev space of complex-valued  functions
 on $\TTT$, 
 provided with the norm $\|\cdot\|_s$,
 \[\left\|u\right\|_s^2=\langle (-\Delta)^su,u\rangle+\langle u,u\rangle,\quad \text{if}\quad s\ge0\,,\]
 where $\langle\cdot,\cdot\rangle$ is the real scalar product in $L^2({\TTT})$,
 \[\langle u,v\rangle=\Re\int_{{\TTT}}u\bar{v} \dx,\quad u,v\in L^2({\TTT}).\]
 For any $s>d/2+1$, it is  known that the mapping $\mathcal{P}: H^s\to H^{s-1},\;u\mapsto \mathcal{P}(\nabla u,u)$, is smooth and locally Lipschitz, see 
 below Lemma~\ref{lem-smooth}.
 
Our goal is to  study the dynamics of Eq.\ (\ref{m-eq}) on time intervals of
 order $\epsilon^{-1}$ when $0<\epsilon\ll 1$. Introducing the slow time $\tau=\epsilon t$, we rewrite
 the equation  as 
 \begin{equation}\label{m-eq2}
 \dot{u}+\epsilon^{-1}i(-\Delta+V(x))u=\mu\Delta u+\mathcal{P}(\nabla u, u),
 \end{equation}
 where $u=u(\tau,x)$, $x\in{\TTT}$,  and the upper dot  $\dot{}$ stands 
 for $\frac{\mathrm{d}}{\mathrm{d}\tau}$. We assume
 \medskip
 
 \noindent{\bf Assumption A}:  {\it There exists a number 
   $s_*\in(d/2+1, n]$ and for every $M_0>0$ there exists $T=T(s_*, M_0)>0$ such that if $u_0\in H^{s_*}$ and $\|u_0\|_{s_*}\le M_0$, then 
     Eq.~\eqref{m-eq2} has a unique solution $u(\tau,x)\in C([0,T], H^{s_*})$ with the 
     initial datum~$u_0$, and $||u(\tau,x)||_{s_*}\leqslant C(s_*, M_0,T)$ for $\tau\in[0,T]$.}
 \medskip
 
 This assumption can be verified for  Eq.\ \eqref{m-eq} with various nonlinearities
   $\mathcal{P}$.  For
    example   when $\mu=0$ and    $V(x)\equiv0$ it holds if  $\mathcal{P}(u)$ is any smooth function.
    Indeed, taking the scalar product in the space  $H^{s_*}$ of eq.~\eqref{m-eq2} with $u(t)$ and using 
    the Granwall lemma we get the Assumption~A with suitable positive constants $T(s_*, M_0)$ and $C(s_*, M_0,T)$.
   When $\mu>0$, the assumption with any $T>0$ is satisfied by Eq.~\eqref{m-eq} with  nonlinearity  $\mathcal{P}(u)= -\gamma_R f_p(|u|^2)u-i\gamma_If_q(|u|^2)u$, where $\gamma_R,\gamma_I>0$, the functions $f_p(r)$ and $f_q(r)$ are the monomials $|r|^p$ and $|r|^q$, smoothed out near zero, and
$$
0\leqslant p,q<\infty\quad \text{if}\quad d=1,2\quad\text{and}\quad 0\leqslant p,q<\min\left\{\frac{d}{2},\frac{2}{d-2}\right\}\quad \text{if}\quad d\geqslant 3, $$
see, e.g. \cite{hgdcds}.

 We denote by $A_V$ 
 the Schr\"odinger operator  
 $$A_Vu:=-\Delta u+V(x)u.$$
 Let $\{\lambda_k\}_{k\geqslant1}$ be its eigenvalues, ordered in such a way that 
 \[\lambda_1\leqslant\lambda_2\leqslant\lambda_3\leqslant\cdots \,,
 \]
 and let 
$\{\zeta_k,\;k\geqslant1\}$ of $L^2({\TTT})$ be an orthonormal basis, 
formed by the corresponding 
 eigenfunctions. We denote $\Lambda=(\lambda_1,\lambda_2,\dots)$ and call 
 $\Lambda$ the {\it frequency vector} of \mbox{Eq.\ (\ref{m-eq2})}.
  For a complex-valued function $u\in H^s$, we denote by 
 \begin{equation}\label{fourier}
 \Psi(u):=v=(v_1,v_2,\dots),\quad v_k\in\mathbb{C},\end{equation}
  the vector of its Fourier coefficients with respect to the basis $\{\zeta_k\}_{k\geqslant1}$: $u=\sum_{k\geqslant1}v_k\zeta_k$.
  Note that $\Psi$ is a real operator: it maps real functions $u(x)$ to real vectors $v$. 
  In the space of complex sequences $v=(v_1,v_2,\dots)$, we introduce the norms 
  \[\left|v\right|^2_s=\sum_{k=1}^{+\infty}\left(|\lambda_k|^s+1\right)|v_k|^2,\quad
  s\in\mathbb{R}\,,
  \] 
  and denote $h^s=\{v:|v|_s<\infty\}$. Clearly $\Psi$ defines   an isomorphism between the spaces $H^s$ and $h^s$. 
  
  Now we write Eq.\ (\ref{m-eq2}) in the $v$-variables:
  \begin{equation}\label{eq-v1}
  \dot{v}_k+\epsilon^{-1}i\lambda_kv_k=-\mu\lambda_kv_k+P_k(v),\quad k\in\mathbb{N},\end{equation}
  where
   \begin{equation}\label{map-v}
   P(v):=(P_k(v),\;k\in\mathbb{N})=\Psi\Big(\mu V(x)u+\mathcal{P}(\nabla u, u)\Big),\quad u=\Psi^{-1}v.\end{equation}
    For every $k\in\mathbb{N}$ we set 
  \begin{equation}\label{action-angle}
  I_k(v) =\frac{1}{2}v_k\bar{v}_k,\; \text{ and} \;\varphi_k(v)=\mathrm{Arg}\; v_k \in\T^1=\R/(2\pi\Z)
  \;\text{if}\; v_k\neq0,
  \;\text{else}\; \varphi_k=0.
  \end{equation}
   Then $v_k=\sqrt{2I_k}e^{i\varphi_k}$.  Notice that the quantities $I_k$ are conservation laws of the linear equation $(\ref{m-eq})_{\epsilon=0}$, and that the variables 
   $(I,\varphi)\in \R^\infty_+\times \T^\infty$ are its action-angles.
      For any $(I,\vp)\in \R_+^\infty\times \T^\infty$ we denote
   \begin{equation}\label{000}
   v=v(I,\vp)\quad\text{if}\quad v_k = \sqrt{2I_k} e^{i\vp_k},\;\; \forall\, k\,.
   \end{equation}
   If this relation holds, we will write 
   $
   v\sim(I,\varphi)\,.
   $
 We introduce the weighted $l^1$-space $h^s_I$: 
  \[h_I^s:=\{I=(I_k,\;k\in\mathbb{N}   )\in\mathbb{R}^{\infty}:|I|_s^{\sim}=\sum_{k=1}^{+\infty}2(|\lambda_k|^s+1)|I_k|<\infty\}.\]
  Then
  $\ 
  |v|^2_s= |I(v)|^{\sim}_s$,  for each $ v\in h^s
  $.
  Using the action-angle variables $(I,\varphi)$, we  write Eq. (\ref{eq-v1}) as a slow-fast system:
  \[\dot{I}_k=v_k\cdot\big(-\mu\lambda_kv_k+P_k(v)\big),\quad\dot{\varphi}_k=-\epsilon^{-1}\lambda_k+|v_k|^{-2}\cdots,\quad k\in\mathbb{N}.\]
  Here $a\cdot b$ denotes $\Re(a\bar b)$, for $a,b\in\C$, and the dots
  stand for a 
  factor of order 1 (as $\epsilon\to0$).  
  \medskip
  
  \noindent{\it Effective equations.\/}  
  Our  task is to study the evolution of the actions  $I_k$ when $\epsilon\ll 1$ and $0\le\tau\lesssim1$. 
  An efficient way to deal with this problem is through the so-called {\it interaction representation}. Let us define
\begin{equation}\label{def-a}
a_k(\tau)=e^{i\epsilon^{-1}\lambda_k\tau}v_k(\tau)\ .
\end{equation}
  Then 
\begin{equation}\label{newesti}
|a_k|^2=|v_k|^2=2 I_k\ ,
\end{equation}
so to study the evolution of the actions we can use the $a$-variables instead of the $v$-variables. 
Using
  Eq.\ (\ref{eq-v1}), we obtain for $a=(a_1, a_2, \dots)$ the system of equations
  \begin{equation}\label{eq-a1}
  \dot{a}_k(\tau)=-\mu\lambda_ka_k+e^{i\epsilon^{-1}\lambda_k\tau}P_k(\Phi_{-\epsilon^{-1}\Lambda\tau}a),\quad k\in\mathbb{N}\,,
  \end{equation}
  where for each $\theta=(\theta_k,\;k\in\mathbb{N})\in\mathbb{R}^{\infty}$, $\Phi_{\theta}$ stands for 
   the linear operator in $h^s$ defined by 
  \[\Phi_{\theta}v=v',\quad v'_k=e^{i\theta_k}v_k\quad\forall\,k\,.
  \]
  Clearly  $\Phi_\theta$ defines isometries of all   Hilbert spaces $h^s$, and  in the action-angle variables it reads
  $\ 
  \Phi_\theta(I,\vp) = (I,\vp+\theta)\,.
  $
  
  To approximately describe the dynamics of Eq.\ (\ref{eq-a1}) with $\epsilon\ll1$ 
  we introduce an effective equation:
  \begin{equation}\label{eq-ef1}
  \dot{\tilde{a}}_k=-\mu\lambda_k\tilde{a}_k+R_k(\tilde{a}),\quad k\in\mathbb{N},
  \end{equation}
  where $R(\tilde{a}):=(R_k(\tilde{a}),\;k\in\mathbb{N})$ and 
  \begin{equation}\label{limit-ef}R(\tilde{a})=\lim_{T\to\infty}\frac{1}{T}\int_0^{T}\Phi_{\Lambda t}P(\Phi_{-\Lambda t}\tilde{a})dt.\end{equation}
  We will see in Sections \ref{s-av} and  \ref{s-ef} that the limit in \eqref{limit-ef} is well defined and that 
  Eq.~(\ref{eq-ef1}) is well posed, at least locally in time.
  \medskip
  
  \noindent
  {\it Results.\/} 
  In Section~\ref{s-p-m} we prove that the actions of solutions for the effective equation approximate well the actions 
  $I_k(v(\tau))$ of solutions $v$ for \eqref{eq-v1}. Let us fix any $M_0>0$.
    
  \begin{theorem}\label{m-theorem} Let $u(\tau,x)$, $0\le\tau\le T=T(s_*, M_0)$, be a solution
  of \eqref{m-eq2}, such that $u(0,x)=u_0(x)$, 
   $\|u_0\|_{s_*}\le M_0$, existing by Assumption~A. Denote 
  $v(\tau)=\Psi(u(\tau,\cdot))$, $0\le\tau\le T$. Then a solution $\tilde a(\tau)$ of \eqref{eq-ef1},
  such that $\ta(0)=v(0)$,  exists for $0\le\tau\le T$, and for any $s_1< s_*$ we have 
\[ 
\sup_{0\le\tau\le T}
 \left|I(v(\tau) )-I(\tilde{a}(\tau))\right|_{s_1}^{\sim}\to0, \quad \text{as}\quad \epsilon\to0\,.
\]
The rate of the  convergence does not depend on $u_0$,  if $\|u_0\|_{s_*}\le M_0$.
\end{theorem}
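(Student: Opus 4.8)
The plan is to apply a classical averaging (Anosov-type) argument to the slow–fast system, but carried out in the infinite-dimensional setting with the interaction representation. First I would reduce everything to the $a$-variables: since $|a_k|^2 = |v_k|^2 = 2I_k$, it suffices to estimate $\sup_\tau |I(a(\tau)) - I(\tilde a(\tau))|_{s_1}^\sim$, where $a$ solves \eqref{eq-a1} and $\tilde a$ solves the effective equation \eqref{eq-ef1} with $\tilde a(0) = v(0) = a(0)$. A preliminary step is to establish the a priori bound: by Assumption~A we have $\|u(\tau)\|_{s_*} \le C$ uniformly in $\tau \in [0,T]$ and in $\epsilon$, hence $|v(\tau)|_{s_*} = |a(\tau)|_{s_*} \le C$, so the whole trajectory stays in a fixed ball $B \subset h^{s_*}$. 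Using Lemma~\ref{lem-smooth}, the map $P$ is locally Lipschitz from $h^{s_*}$ to $h^{s_*-1}$, which gives the well-posedness of \eqref{eq-ef1} on $[0,T]$ with $\tilde a(\tau)$ staying in a slightly larger ball, and a uniform Lipschitz constant $L$ for $P$ and $R$ on that ball.

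Next, I would show that the limit in \eqref{limit-ef} exists and that the averaged vector field $R$ is the \emph{resonant average} of $P$: decompose $P$ into Fourier-type components with respect to the ${\mathbb T}^\infty$-action generated by $\Phi_{\Lambda t}$, and observe that the time average kills all components except those in the resonant directions (those corresponding to integer combinations of frequencies that vanish). This uses that $P(\Phi_{-\Lambda t}\tilde a)$ is quasi-periodic in $t$ with frequencies $\Lambda$; the key quantitative input is that, for $\tilde a$ in a fixed ball of $h^{s_*}$, the "tail" contribution of high modes is small in $h^{s_1}$ with $s_1 < s_*$, so that the averaging can be performed on a finite-dimensional Galerkin truncation up to an error controlled uniformly by a gain in smoothness. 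This is where the condition $s_1 < s_*$ is essential — one spends regularity to get compactness/smallness of the tail.

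Then comes the core averaging estimate. Write the difference $w(\tau) = a(\tau) - \tilde a(\tau)$, which satisfies
\[
\dot w_k = -\mu\lambda_k w_k + \big(e^{i\epsilon^{-1}\lambda_k\tau} P_k(\Phi_{-\epsilon^{-1}\Lambda\tau}a) - R_k(\tilde a)\big).
\]
Add and subtract $e^{i\epsilon^{-1}\lambda_k\tau}P_k(\Phi_{-\epsilon^{-1}\Lambda\tau}\tilde a)$: the Lipschitz part yields a term bounded by $L\,|w(\tau)|_{s_1}$ (for this one needs the dissipation $-\mu\lambda_k$ and/or a smoothing estimate to close in the $h^{s_1}$-norm rather than $h^{s_1-1}$), and the remaining term,
\[
\int_0^\tau \Big(\Phi_{\epsilon^{-1}\Lambda t}P(\Phi_{-\epsilon^{-1}\Lambda t}\tilde a(t)) - R(\tilde a(t))\Big)\,dt,
\]
must be shown to go to $0$ in $h^{s_1}$ as $\epsilon \to 0$, uniformly on $[0,T]$ and over $u_0$ in the ball. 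This is the standard oscillatory-integral step: chop $[0,\tau]$ into subintervals of length $\delta$, on each of which $\tilde a(t)$ is nearly frozen (since $\dot{\tilde a}$ is bounded), then on a frozen argument the integrand is quasi-periodic with zero mean after subtracting $R$, so its integral over a length-$\delta$ interval is $O(\delta^2 + \epsilon)$ by the definition of $R$ as the $T\to\infty$ average; summing gives $O(\delta + \epsilon/\delta)$ and one optimizes $\delta = \sqrt\epsilon$. Again the tail estimate (using $s_1 < s_*$) is what makes this uniform in the infinite-dimensional norm. Finally, a Gronwall argument on $|w(\tau)|_{s_1}$ closes the estimate, and since $I$ is a Lipschitz function from $h^{s_1}$ to $h^{s_1}_I$ on bounded sets, $|I(v(\tau)) - I(\tilde a(\tau))|_{s_1}^\sim \to 0$ uniformly.

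The main obstacle I expect is the infinite-dimensional bookkeeping in the oscillatory-integral step: controlling the averaging error \emph{uniformly in all modes at once} in the $h^{s_1}$-norm. The frequencies $\lambda_k \to \infty$ densely, so there is no uniform lower bound on nonzero "small divisors" $|\langle \bk,\Lambda\rangle|$, and one cannot average mode-by-mode with a uniform rate. The resolution is precisely to exploit the regularity gap $s_* - s_1 > 0$: truncate to the first $N$ modes where $N = N(\epsilon) \to \infty$ slowly, average the truncated (finite-dimensional, hence uniformly non-resonant up to a threshold) system with an $\epsilon$-dependent but explicit rate, and bound the discarded tail by $C N^{-(s_*-s_1)/2}$ uniformly in time via the a priori $h^{s_*}$-bound. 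Balancing $N(\epsilon)$ against the finite-dimensional averaging rate yields the claimed convergence without any rate — consistent with the theorem only asserting $\to 0$.
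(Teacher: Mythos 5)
Your route is genuinely different from the paper's: you compare $a^\epsilon$ and $\tilde a$ directly via a Gronwall estimate on $w=a^\epsilon-\tilde a$ with the oscillatory integral evaluated along the effective solution (the classical Bogolyubov first-approximation scheme), whereas the paper never estimates the difference at all — it proves only the key oscillatory-integral bound along $a^\epsilon$ itself (Lemma \ref{rnls-lem-apr}), then uses the bounds \eqref{new1}--\eqref{new2} and Arzel\`a--Ascoli to extract limits of $a^\epsilon$ in $X^{s_1}$, identifies any limit point as a mild solution of \eqref{eq-ef1}, and concludes by uniqueness (with a second compactness/contradiction argument for uniformity in $u_0$). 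This difference is not cosmetic, and as written your scheme has two genuine gaps. First, the existence of $\tilde a$ on all of $[0,T]$, which is part of the statement, does not follow from ``$P$ (hence $R$) is locally Lipschitz $h^{s_*}\to h^{s_*-1}$''; that gives only local-in-time solvability, and there is no a priori estimate for the effective equation (Assumption~A concerns Eq.~\eqref{m-eq2}, not \eqref{eq-ef1}). The paper closes exactly this point by showing the $X^{s_1}$-limit $a^0$ of $a^\epsilon$ inherits the bound $|a^0(\tau)|_{s_*}\le M_1$ by Fatou, so the stopping time $\tilde T$ equals $T$. In your scheme the Gronwall estimate only gives closeness in $h^{s_1}$, which does not prevent $\tilde a$ from exiting the $h^{s_*}$-ball before $T$; you would still need a weak-limit argument of the paper's type to get the $h^{s_*}$-bound on $\tilde a$ (which is also what makes your ``frozen'' averages converge uniformly, since Lemma \ref{lem-def-av}(iii) gives uniformity only on balls of $h^{s_*}$).

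Second, the Lipschitz step of your Gronwall is not closed: $P$ (hence $Y$ and $R$) loses one derivative, so the difference term is bounded by $C|w|_{s_1+1}$ in $h^{s_1}$, not $L|w|_{s_1}$. You flag this and invoke ``dissipation and/or smoothing'', but this must actually be carried out: one has to pass to the Duhamel form, use the smoothing $\|e^{-\mu\tau\hA}\|_{h^{s_1-1}\to h^{s_1}}\lesssim \tau^{-1/2}$ (hence $\mu>0$; for $\mu=0$ the nonlinearity is derivative-free and the issue disappears) together with a singular Gronwall inequality, and then the oscillatory term acquires the semigroup inside the time integral, which complicates the freeze-and-average step (the semigroup factor varies with $s$ and must be frozen or integrated by parts as well). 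Relatedly, your interim claim that each frozen length-$\delta$ block contributes $O(\delta^2+\epsilon)$ is unjustified: the convergence of $\langle P\rangle^T_\Lambda$ to $R$ has no rate (no small-divisor bound is available), which is why the paper only gets $L\,\overline\vk(L^{-1};M_1)$ per block and a rate-free conclusion; your final paragraph (Galerkin truncation plus Fej\'er approximation, finite resonance set, and balancing $N(\epsilon)$) is the correct repair and is essentially the mechanism of Lemma \ref{lem-def-av}, but the $O(\delta+\epsilon/\delta)$ optimization in the middle of your argument should be discarded. With these two points repaired your approach would work for $\mu>0$, at the price of machinery (mild solutions, singular Gronwall) that the paper's compactness argument avoids.
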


This theorem may be regarded as a PDE-version of the Bogolyubov averaging principle, see
\cite{BM} and \cite{AKN}, Section~6.1. The result and its its proof may be easily recasted  to a theorem 
on perturbations of linear Hamiltonian systems with discrete spectrum. Instead of doing this, below we
briefly discuss its generalisations to other nonlinear PDE problems. 
\smallskip

In the second part of the paper (Sections \ref{sec-random}--\ref{sec-r2})
 we consider the CGL equations \eqref{m-eq}
 with added small random force:
\begin{equation}\label{y1}
u_t+i(-\Delta+V(x))u=\epsilon\mu\Delta u+\epsilon\mathcal{P}(\nabla u,u)+\sqrt{\epsilon}\,
\frac{d}{dt} \sum_{l\ge1} b_l \beta_l(t) e_l(x),
\end{equation}
where $u=u(t,x),\; x\in{\TTT}$, the coefficients $b_l$ decay fast enough with $|l|$, $\{\beta_l(t)\}$ are 
standard independent complex Wiener processes and $\{ e_l(x)\}$ is the usual trigonometric basis
of the space $L_2(\TTT)$, parametrized by natural numbers.  It turns out that the effective 
equation for \eqref{y1} is the equation \eqref{eq-ef1}, perturbed by a suitable stochastic forcing,
see Section~\ref{sec-random}. Assuming that the function $\cP$ has at most a polynomial growth 
and that the equation satisfies a suitable stochastic analogy of the Assumption~A we prove
a natural stochastic version of Theorem~\ref{m-theorem} (see Theorem~\ref{f-theorem}). Next,
 supposing that the stochastic
  effective equation is mixing and has a unique stationary measure $\mu_0$,  we prove in 
 Theorem~\ref{t.stat} 
that if $\mu_\epsilon$ is a stationary measure for Eq.~\eqref{y1}, then 
$ \Psi\circ\mu_\epsilon$ converge to $\mu_0$ as $\epsilon\to0$.  So if 
the stochastic  effective equation is mixing, then it 
 comprises asymptotical properties of solutions for Eq.~\eqref{y1}
  as $ t \to\infty$ and $\epsilon\to 0$. 
\medskip

The proof of the theorems in this work follows the Anosov approach to averaging in 
finite-dimensional systems (see in \cite{AKN, LochM}), its version for averaging in resonant systems
(see in \cite{AKN}) and its stochastic version due to Khasminski \cite{Khas68}. The crucial idea that 
for averaging in PDEs the averaged equations for actions (which are equations with 
singularities) should be considered jointly with suitable effective equations (which are regular
 equations)  was suggested in \cite{kuksingafa} for averaging in stochastic PDEs, and later was
 used in \cite{Kuk11} and \cite{hgdcds, hgjdde, skam2013, bplane}.  It was realised in the second group
 of  publications  that for perturbations of linear systems the method may be well combined with the interaction representation of solutions, well known and popular in nonlinear physics (see
 \cite{BM, Naz}), and which  already was used for purposes of completely resonant 
  averaging, corresponding to  constant coefficient PDEs
 with small nonlinearities on the square torus  (see \cite{GG12, FGH}). 
 
 For the case when the spectrum of the unperturbed linear system is non-resonant (see
 below Example~\ref{ex1}), the results of this paper were obtained in \cite{Kuk11, hgdcds}, while
 for the case when the spectrum  is completely resonant -- in
 \cite{skam2013, hgjdde}. The novelty of this work is a version of the Anosov method of averaging, applicable to 
 nonlinear PDEs with small nonlinearities, which does not impose restrictions on the spectrum
 of the unperturbed equation.
 
 Alternatively, the averaging for  weakly nonlinear PDEs may be studied, using the normal 
 form techniques, e.g. see \cite{bambusi1} and references therein. Compared to the Anosov 
 approach, exploited in this work, the method of normal form is much more demanding to the
  spectrum of the unperturbed equation, and more sensitive to its perturbations. So usually it
  applies only in  small vicinities of equilibriums. Its advantage is that it may imply stability on
  longer time intervals, while the method of this work is restricted to the first-order averaging. So
  in the deterministic setting it allows to control solutions of $\epsilon$-perturbed equations 
  only on time-intervals of order $\epsilon^{-1}$ (still, in the stochastic setting it also allows to 
  control the stationary measure, which describes the asymptotic behaviour of solutions as
  $t\to\infty$).
  \medskip
  
    \noindent
  {\it Generalizations.\/}  The Anosov-like method of resonant averaging, presented in this work,
   is very flexible. With some slight changes,
   it easily generalizes to weakly nonlinear CGL equations, involving
   high order derivatives,
\begin{equation}\label{dcdse}
u_t+i(-\triangle u+V(x)u)=\epsilon \mathcal{P}(\nabla^2 u,\nabla u, u,x),\quad x\in {\TTT},\end{equation}
 provided  that the Assumption~A holds and the 
corresponding effective equation is well posed locally in time. See in Appendix A
(also see  \cite{hgdcds}, where a similar result is 
proven for the case of   non-resonant spectra). 

The method applies to equations \eqref{m-eq}  and \eqref{y1} in a bounded domain
$\mathcal O\subset\R^d$ under Dirichlet boundary conditions. Indeed,
if $d\le3$, then to treat the corresponding boundary-value problem 
 we can literally repeat the argument of this work, replacing there the space $H^s$
with the Hilbert space 
$H^2_0({\mathcal O})=\{u\in H^2 ({\mathcal O}) :  u\mid_{\partial {\mathcal O}}=0\}$.
If $d\ge4$, then $H^s$ should be replaced with an  $L_p$-based  Banach space 
$W^{2,p}_0({\mathcal O})$, where $p>d/2$. 

Obviously the method applies to weakly nonlinear equations of other types; e.g. to weakly
 nonlinear wave equations. In \cite{bplane} the method in its stochastic form 
  was applied to the Hasegawa-Mima equation,
 regarded as a perturbation of the Rossby equation $(-\Delta +K)\psi_t (t,x,y)- \psi_x=0$,
 while in \cite{Dym} it is applied to systems of non-equilibrium statistical physics, where each particle 
 is perturbed by an $\eps$-small Langevin thermostat, and  is studied 
 the limit $\eps\to0$   (similar to the same limit in Eq. \eqref{y1}).

The  averaging for perturbations of nonlinear integrable PDEs is more complicated.
Due to the lack in  the functional phase-spaces of an analogy  of the Lebesgue measure (required 
  by the Anosov approach to the finite-dimensional deterministic averaging),  in this case the results
   for stochastic perturbations are significantly 
stronger than the deterministic results. See in \cite{HGK}.

 \medskip
 \noindent{\bf Acknowledgments.}  We are thankful to Anatoli Neishtadt for discussing the 
 finite-dimensional averaging.
 This work was supported by  l'Agence
Nationale de la Recherche through the grant STOSYMAP (ANR 2011BS0101501).

\section{Resonant averaging in Hilbert Spaces}\label{s-av}
The goal of this section is to show that the limit  in   (\ref{limit-ef}) is well-defined in some suitable settings and study its properties.
 Below for an infinite-vector $v=(v_1,v_2,\dots)$ and any $m\in\N$ we denote
$$
v^m=(v_1,\dots,v_m), \quad\text{or}\quad v^m=(v_1,\dots,v_m, 0,\dots),
$$
depending on the context. This agreement 
 also applies to elements $\vp=(\vp_1,\vp_2,\dots)$ of the torus $\T^\infty$. 
For $m$-vectors $I^m, \vp^m, v^m$ we write $v^m \sim (I^m, \vp^m)$ if \eqref{000}
 holds for $k=1,\dots,m$. 
By $\Pi^m$, $m\geqslant1$, we denote the  Galerkin  projection
\[\Pi^m: h^0\to h^0, (v_1,v_2,\dots)\mapsto v^m=(v_1,\dots,v_m,0,\dots).\]

For a continuous complex
 function $f$ on a Hilbert space $H$, we say  that  $f$ is locally Lipschitz and write 
 $f\in Lip_{loc}(H)$ if 
\begin{equation}\label{lip-c1}\left|f(v)-f(v')\right|\leqslant \mathcal{C}(R)\|v-v'\|,\quad \text{if}\quad \|v\|,\|v'\|\leqslant R,\end{equation}
for some  continuous non-decreasing 
 function $\mathcal{C}:\R^+\to  \R^+$ which depends on $f$. We  write
 \be\label{lip-c2}
 f\in Lip_{\mathcal C}(H) \;\; \text{if \eqref{lip-c1} holds and  $|f(v)| \le {\mathcal C}(R)$ if }\ \|v\|\le R\,.
 \ee
 If $ f\in Lip_{\mathcal C}(H)$, where ${\mathcal C}(\cdot)=\,$Const, then $f$ is a bounded
 (globally)  Lipschitz  function.
 If $B$ is a Banach space, then the space $Lip_{loc}(H,B)$ of locally Lipschitz mappings $H\to B$ and its subsets 
 $Lip_{\mathcal C}(H,B)$ are defined similarly. 
 \smallskip
 
 For any vector 
$W=(w_1,w_2,\dots)\in\mathbb{R}^{\infty}$ we set 
 \begin{equation}
\langle f\rangle^T_{W,l}(v)=\frac{1}{T}\int_0^Te^{iw_lt}f(\Phi_{-Wt}v)dt,
\end{equation}
and if the limit of $\langle f\rangle_{W,l}^{T}(v)$ when $T\to\infty$ 
exists, we denote 
$$\langle f\rangle_{W,l}=\lim_{T\to\infty}\langle f\rangle_{W,l}^T(v).$$
Concerning this definition we have the following lemma. Denote
$$
B(M,h^s)=\{v\in h^s: |v|_s\leqslant M\},\quad M>0.
$$
\begin{lemma}\label{lem-def-av} Let $f\in Lip_{\mC}(h^{s_0})$ for some $s_0\geqslant0$ and some function $\mC$ 
as above.Then 

(i)  For every $T\ne0$, $\langle f\rangle_{W,l}^T \in Lip_{\mC}(h^{s_0})$.

(ii) 
 The limit 
$\langle f\rangle_{W,l}(v)$ exists for $v\in h^{s_0} $ and this function 
 also belongs to $Lip_{\mC}(h^{s_0})$.

(iii) For $s>s_0$ and 
 any $M>0$,  the functions 
  $\langle f\rangle_{W,l}^T(v)$ converge, as  $T\to\infty$, to 
$\langle f\rangle_{W,l}(v)$ uniformly for $v\in B(M,h^s)$. 

(iv) The convergence is uniform for $f\in Lip_{\mC}(h^{s_0})$ with a fixed 
function ${\mC}$. 
\end{lemma}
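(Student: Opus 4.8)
\emph{Proof proposal.} The plan is to prove the four assertions in order, reducing the existence of the limit to the classical fact that a linear flow on a finite-dimensional torus has well-defined time-averages. Throughout, recall that each $\Phi_{-Wt}$ is an isometry of $h^{s_0}$.

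\emph{Part (i).} Since $\Phi_{-Wt}$ is an isometry and $|e^{iw_lt}|=1$, for $|v|_{s_0},|v'|_{s_0}\le R$ one gets $|\langle f\rangle^T_{W,l}(v)-\langle f\rangle^T_{W,l}(v')|\le\frac1T\int_0^T|f(\Phi_{-Wt}v)-f(\Phi_{-Wt}v')|\,dt\le\mathcal C(R)|v-v'|_{s_0}$, and likewise $|\langle f\rangle^T_{W,l}(v)|\le\mathcal C(R)$; the strong continuity of $t\mapsto\Phi_{-Wt}v$ in $h^{s_0}$ makes the integral well defined. This is (i).

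\emph{Part (ii).} First I treat a finite-dimensional $v=\Pi^m v$, taking $m\ge l$ (append zero components if necessary). Identifying the restriction of $f$ to $\Pi^m h^{s_0}\cong\C^m$ with a continuous $\tilde f\colon\C^m\to\C$, we have $e^{iw_lt}f(\Phi_{-Wt}v)=h_{f,v}(w_1t,\dots,w_mt)$, where $h_{f,v}\colon\T^m\to\C$, $h_{f,v}(\psi)=e^{i\psi_l}\tilde f(e^{-i\psi_1}v_1,\dots,e^{-i\psi_m}v_m)$, is continuous; hence $\langle f\rangle^T_{W,l}(v)=\frac1T\int_0^T h_{f,v}(Wt)\,dt$ is a time-average of a continuous function along the linear flow $t\mapsto tW\bmod 2\pi$ on $\T^m$. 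For a character $e^{ik\cdot\psi}$ this average equals $1$ if $k\cdot W=0$ and $\frac{e^{i(k\cdot W)T}-1}{i(k\cdot W)T}\to0$ otherwise, so the limit exists for trigonometric polynomials; approximating $h_{f,v}$ uniformly by such polynomials and using $|\frac1T\int_0^T h(Wt)\,dt|\le\|h\|_\infty$ shows that $T\mapsto\langle f\rangle^T_{W,l}(v)$ is Cauchy as $T\to\infty$, hence convergent. For general $v\in h^{s_0}$ set $v^m=\Pi^m v$ ($m\ge l$); by (i) the bound $|\langle f\rangle^T_{W,l}(v)-\langle f\rangle^T_{W,l}(v^m)|\le\mathcal C(R)|v-v^m|_{s_0}$ holds uniformly in $T$, and $|v-v^m|_{s_0}\to0$. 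A standard uniform-Cauchy argument then gives the existence of $\langle f\rangle_{W,l}(v):=\lim_T\langle f\rangle^T_{W,l}(v)=\lim_m\langle f\rangle_{W,l}(v^m)$; passing to the limit $T\to\infty$ in the estimates of (i) shows $\langle f\rangle_{W,l}\in Lip_{\mathcal C}(h^{s_0})$.

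\emph{Parts (iii) and (iv), and the main obstacle.} Since $\lambda_k\to+\infty$, the embedding $h^s\hookrightarrow h^{s_0}$ is compact for $s>s_0$, so $B(M,h^s)$ is precompact in $h^{s_0}$; on it the family $\{\langle f\rangle^T_{W,l}\}_{T>0}$ is equi-Lipschitz with constant $\mathcal C(M)$ by (i) and converges pointwise to the $\mathcal C(M)$-Lipschitz limit by (ii), and an $\e$-net (``three-$\e$'') argument on this totally bounded set upgrades this to uniform convergence, giving (iii). For (iv) I take the net $v^{(1)},\dots,v^{(N)}$ to consist of finite-dimensional vectors (legitimate since finite vectors are $h^{s_0}$-dense in $B(M,h^s)$) and observe that at each fixed finite-dimensional $v^{(j)}\in\C^m$ the rate of $\langle f\rangle^T_{W,l}(v^{(j)})\to\langle f\rangle_{W,l}(v^{(j)})$ is uniform over $f\in Lip_{\mathcal C}(h^{s_0})$: this rate is that of $\frac1T\int_0^T h_{f,v^{(j)}}(Wt)\,dt$, and the functions $h_{f,v^{(j)}}$ are uniformly bounded by $\mathcal C(|v^{(j)}|_{s_0})$ and equicontinuous on $\T^m$ uniformly in $f$, hence all approximable within any $\eta$ by trigonometric polynomials of a common degree $K=K(\eta)$, after which the rate is controlled by $\eta$, $K$ and the fixed positive number $\min\{|k\cdot W|:0<|k|\le K,\ k\cdot W\ne0\}$ — none depending on $f$. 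Feeding this $f$-independent pointwise rate into the net argument (with the $f$-independent Lipschitz constant $\mathcal C(M)$) yields (iv). The only genuinely delicate point is this last uniform-in-$f$ rate: it rests on the quantitative form of Weyl's equidistribution theorem together with the remark that the relevant ``small divisors'' over bounded multi-indices are just fixed positive constants; everything else (isometry of $\Phi$, compactness of the Sobolev embedding, the $\e$-net argument) is soft.
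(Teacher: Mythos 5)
Your proof is correct and follows essentially the same route as the paper's: Galerkin truncation to reduce to a finite-dimensional torus, uniform trigonometric (Fej\'er-type) approximation, explicit averaging of resonant versus non-resonant characters with an $f$-independent small-divisor bound, and compactness of $B(M,h^s)$ in $h^{s_0}$ together with the equi-Lipschitz property for the uniform statements (iii)--(iv). The only cosmetic differences are that you absorb the factor $e^{iw_lt}$ into the torus function and use qualitative Weierstrass approximation in (ii), invoking the quantitative (Fej\'er) rate only for (iv), whereas the paper works with Fej\'er sums from the start.
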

\begin{proof} (i) It is obvious since the transformations $\Phi_\theta$ are isometries of $h^{s_0}$.

(ii)  To prove this, consider the 
restriction of $f$ to $B(M,h^{s_0})$, for any fixed $M>0$. Let us take some $v \in B(M,h^{s_0})$
and fix any $\rho>0$. Below in this proof by $O(v)$, $O_1(v)$, etc, we denote various functions 
$g(v)=g(I, \vp)$, defined for 
 $|v |_{s_0}\le M $  and bounded by $1$. 

Let us choose any { $m=m(\rho,M, v, \mC)$ }such that 
$$
\mathcal{C}(M)\, |v-\Pi^m v|_{s_0} \le \rho\,.
$$
Then 
$
|f(v)-f(\Pi^mv)|<\rho,
$
and by (i) 
$$\big|\langle f\rangle_{W,l}^{T}(v)-\langle f\rangle_{W,l}^T(\Pi^mv)\big|<\rho\,,
$$
for every $T>0$. 

Let us  set
 $$
  \mathcal{F}^m(I^m,\vp^m) =
  \mathcal{F}^m(v^m)=   f \big(v^m\big),\qquad
   \forall\, v^m\sim(I^m, \vp^m) \in \C^m\,,
   $$
   where in the r.h.s.  $v^m$ is regarded as the vector $(v^m,0,\dots)$.
 Clearly, the function  $\vp^m\mapsto  \mathcal{F}^m(I^m,\varphi^m)$ is  Lipschitz-continuous 
  on $\mathbb{T}^m$. So    its Fejer polynomials
   $$
   \sigma_K(\cF^m)=
   \sum_{k\in\mathbb{Z}^m, \, |k|_\infty\le K}a_k^K e^{ik\cdot(\varphi^m)}, \qquad K\ge1\,,
   $$
  where ${a_k^K}={a_k^K}(m,I^m)$, 
   converges to 
$ \mathcal{F}^m(I^m,\varphi^m)$ uniformly on $\mathbb{T}^m$.
 Moreover, the rate of convergence depends only on its Lipschitzian norm and  the dimension $m$ (see e.g.\ Theorem 1.20, Chapter \uppercase\expandafter{\romannumeral17}  of \cite{zygmund}).  Therefore, there exists $K=K(\mC, M,\rho, m)>0$ such that 
\be\label{decomp}
 \mathcal{F}^m(I^m,\varphi^m)=
 \sum_{k\in\Z^m,   {|k|_\infty}\leqslant K}{a_k^K}e^{ik\cdot\varphi^m} + \rho  O_1  (I^m,\vp^m)\,.
\ee
Now we define 
$$
\mathcal{F}^{res}_K(I^m,\varphi^m)=\sum_{k\in S(K)}{a_k^K}e^{ik\cdot\varphi^m},\quad S(K)=
\{k\in\mathbb{Z}^m: \; {|k|_\infty}\leqslant K, w_l-\sum_{j=1}^mk_iw_i=0\}.
$$

Since 
$$ 
 \mathcal{F}^m
\big(\Phi_{-W^m t}(\Pi^mv)\big) = \mathcal{F}^m(I^m,\varphi^m-Wt)\,,
$$
then
\begin{equation*}
\begin{split}
\langle e^{ik\cdot \vp^m}  \rangle_{W,l}^T  &= e^{ik\cdot \vp^m} \quad\text{if} \quad k\in S(K)\,,\\
\Big| \langle e^{ik\cdot \vp^m}  \rangle_{W,l}^T \Big| &\le \frac{2T^{-1}}{|w_l-k\cdot W^m|} \quad\text{if} \quad {|k|_\infty}\le K,\;
 k\notin S( K)\,,
\end{split}
\end{equation*}
where we regard $e^{ik\cdot \vp^m}$ as a function of $v$.  Accordingly,
\begin{equation*}
\begin{split}
\langle f\rangle_{W,l}^T (v)& = \langle \mathcal{F}^m(I^m,\varphi^m)
\rangle_{W,l}^T + \rho O_2 (v)\\
&=
\mathcal{F}^{res}_K(I^m,\varphi^m) + {C(\rho, M,W,f,I)}{T^{-1}} O_3(v) +\rho O_4(v)
\,.
\end{split}
\end{equation*}
 So there exists {$\bar T=T(\rho, M, W, f,I)>0$} such that if $T\geqslant \bar T$, then 
$$
\left|\langle f\rangle_{W,l}^T-\mathcal{F}_K^{res}(I^m,\varphi^m)\right|<2\rho\,,
$$
and  for any $T'\geqslant T''\geqslant \bar T$, we have 
$$\big|\langle f\rangle_{W,l}^{T'}(v)-\langle f\rangle_{W,l}^{T''}(v)\big|<4\rho.
$$
This implies that  the limit 
$\langle f\rangle_{W,l}(v)$ exists for every $v\in B(M,h^{s_0})$.  Using (i) we obtain that 
 $\langle f\rangle_{W,l}(\cdot)\in Lip_{\mC}(h^{s_0})$. 

(iii) This statement  follows  directly from (ii) since the family of  
 functions $\{\langle f\rangle_{W,l}^{T}(v)\}$ is uniformly continuous on balls $B(R,h^{s_0})$ by (i) 
and each ball $B(M,h^s)$,  $s>s_0$, is compact in $h^{s_0}$.

(iv) From the proof of (ii) we see that for any $\rho>0$ and $v\in h^{s_0}$, there exists $T=T(W,\rho,v,\mC)$ such that if $T'\geqslant T$, then $|\langle f\rangle_{W,l}^{T'}(v)-\langle f\rangle_{W,l}(v)|\leqslant \rho$. This implies the assertion.
\end{proof}
We now give some examples of the limits $\langle f\rangle_{W,l}$.

\begin{example}\label{ex1}
If the vector $W$ is non-resonant, i.e., non-trivial finite linear combinations of  $w_j$'s 
with integer coefficients do not vanish (this property holds for typical potentials $V(x)$, see \cite{Kuk11}), then 
the set $S(K)$ reduces to one trivial resonance $e_l=(0,\dots,0,1,0,\dots0)$, where 1 stands on the $l$-th place 
(if $m< l$, then  $S(K)=\emptyset$). Let $f(v)$ be any finite polynomial of $v$. We write it in the form 
$\sum_{k,l \in\N^\infty, |k|,|l|<\infty} f_{k,l}(I)v^k \bar v^l$, where $f_{k,l}$ are polynomials of $I$ and finite vectors $k,l$
are such that if  $k_j\ne0$, then $l_j=0$, and vice versa. Then 
$\langle f\rangle_{W,l} =  f_{e_l,I}(I) v_l$.
\end{example}

\begin{example}\label{ex2}
If $f$ is a  linear functional, 
  $f=\sum_{i=1}^{\infty} b_i v_i$, then for any $ l\in\mathbb{N}$, 
\[\langle f\rangle_{W,l}=\sum_{i\in \mathcal{A}^1_l}b_iv_i,\quad \mathcal{A}^1_l=\{i\in \mathbb{N}: w_i-w_l=0\}.\]
 If $f$ is polynomial of $v$, e.g. $f=\sum_{i+j+m=k}a_{i,j,k}v_iv_jv_k$, then 
\[\langle f\rangle_{W,l}=\sum_{(i,j,m)\in \mathcal{A}_l^3}a_{i,j,k}v_iv_jv_k,\quad 
\mathcal{A}_l^3=\{(i,j,m)\in\mathbb{N}^3: w_l-w_i-w_j-w_m=0\}.\]
\end{example}

We may also consider the averaging 
\be\label{newav}
\begin{split}
 \lan\lan f\ran\ran^T_W(v)=\frac1{T} \int_0^T f(\Phi_{-Wt}v)\,dt\,,\quad
\lan\lan f\ran\ran _W(v) =\lim_{T\to\infty}  \lan\lan f\ran\ran^T_W(v)\,.
\end{split}
\ee

\begin{lemma}\label{lnew}
Let $f\in Lip_{\mC}(h^{s_0})$. Then 

a) for the averaging $\lan\lan \cdot \ran\ran _W$ hold natural analogies 
of all assertions of Lemma~\ref{lem-def-av}. 

b) The function $\lan\lan f\ran\ran _W$ commutes with the transformations $\Phi_{Wt}$, $t\in\R$.
\end{lemma}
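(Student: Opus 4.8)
The plan is to reduce everything to Lemma~\ref{lem-def-av} by rewriting the new averaging $\lan\lan f\ran\ran_W$ as a (limit of) averagings of the type $\lan f\ran_{W,l}$ already treated, or better, by re-running the proof of Lemma~\ref{lem-def-av} with the trivial modification that no exponential factor $e^{iw_l t}$ is inserted. Concretely, for part a) I would repeat the three reductions from the proof of Lemma~\ref{lem-def-av}(ii): first use the Lipschitz bound \eqref{lip-c1} together with the Galerkin projection $\Pi^m$ to approximate $f$ uniformly on $B(M,h^{s_0})$ by the finite-dimensional function $\mathcal{F}^m(I^m,\vp^m)=f(\Pi^m v)$ up to an error $\rho$; then approximate $\mathcal{F}^m$ on $\T^m$ by its Fejér polynomial $\sigma_K(\mathcal{F}^m)=\sum_{|k|_\infty\le K}a_k^K(I^m)e^{ik\cdot\vp^m}$ up to another $\rho$; and finally observe that under the flow $\Phi_{-Wt}$ the monomial $e^{ik\cdot\vp^m}$ becomes $e^{ik\cdot(\vp^m-W^m t)}=e^{-i(k\cdot W^m)t}e^{ik\cdot\vp^m}$, so its time-average over $[0,T]$ equals $e^{ik\cdot\vp^m}$ when $k\cdot W^m=0$ and is $O(T^{-1}/|k\cdot W^m|)$ otherwise. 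Hence $\lan\lan f\ran\ran_W^T(v)$ converges, as $T\to\infty$, to the ``resonant part'' $\sum_{k\in S_0(K)}a_k^K(I^m)e^{ik\cdot\vp^m}$ with $S_0(K)=\{k:\ |k|_\infty\le K,\ k\cdot W^m=0\}$, up to an error $2\rho$; a Cauchy-sequence argument identical to the one in Lemma~\ref{lem-def-av}(ii) then gives existence of the limit $\lan\lan f\ran\ran_W(v)$ and, via part (i) of that lemma (isometry of $\Phi_\theta$), membership in $Lip_{\mC}(h^{s_0})$. The analogues of (iii) and (iv) follow verbatim: uniform continuity of the family $\{\lan\lan f\ran\ran_W^T\}$ on balls of $h^{s_0}$ plus compactness of $B(M,h^s)$ in $h^{s_0}$ for $s>s_0$ gives uniform convergence on $B(M,h^s)$, and the bookkeeping in the reduction shows the threshold $\bar T$ depends on $f$ only through $\mC$.

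For part b), the key identity is the semigroup (in fact group) property $\Phi_{-W(t+r)}=\Phi_{-Wt}\circ\Phi_{-Wr}$. From this,
\[
\lan\lan f\ran\ran_W^T(\Phi_{Wr}v)=\frac1T\int_0^T f(\Phi_{-Wt}\Phi_{Wr}v)\,dt
=\frac1T\int_0^T f(\Phi_{-W(t-r)}v)\,dt
=\frac1T\int_{-r}^{T-r} f(\Phi_{-Ws}v)\,ds ,
\]
which differs from $\lan\lan f\ran\ran_W^T(v)$ by two boundary integrals over intervals of length $|r|$, each bounded by $\mC(|v|_{s_0})|r|/T\to0$ as $T\to\infty$. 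Passing to the limit yields $\lan\lan f\ran\ran_W(\Phi_{Wr}v)=\lan\lan f\ran\ran_W(v)$ for every $r\in\R$, i.e. $\lan\lan f\ran\ran_W$ is constant along the orbits of $\{\Phi_{Wt}\}$; equivalently it commutes with these transformations (being invariant under them is the $\C$-valued-function analogue of commuting). One should note that this invariance is consistent with, and in fact sharpens, the description of the limit obtained in part a): the resonant monomials $e^{ik\cdot\vp^m}$ with $k\cdot W=0$ are exactly the ones invariant under $\vp\mapsto\vp-Wt$.

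I do not expect a genuine obstacle here: the whole content is that dropping the factor $e^{iw_l t}$ changes nothing essential in the argument of Lemma~\ref{lem-def-av}, with the resonance set $S(K)$ (defined by $w_l-\sum k_i w_i=0$) simply replaced by $S_0(K)$ (defined by $\sum k_i w_i=0$). The only point requiring a little care is uniformity: one must check, as in Lemma~\ref{lem-def-av}(ii)--(iv), that the integer $m=m(\rho,M,\mC)$ and the Fejér degree $K=K(\mC,M,\rho,m)$ can be chosen depending on $f$ only through the modulus $\mC$ and on $v$ only through $M\ge|v|_{s_0}$, so that the convergence is uniform on $h^s$-balls and over the class $Lip_{\mC}(h^{s_0})$; this is exactly the bookkeeping already carried out and can be invoked directly. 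Hence the proof is essentially a remark, and I would present it as such, spelling out only the computation in part b) and pointing to the proof of Lemma~\ref{lem-def-av} for part a).
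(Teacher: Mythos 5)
Your proposal is correct and coincides with the paper's own (much terser) proof: part a) is exactly the observation that the argument of Lemma~\ref{lem-def-av} goes through with $w_l$ replaced by $0$ (so $S(K)$ becomes your $S_0(K)$), and part b) is the change-of-variables/boundary-term computation that the paper summarizes as "immediately follows from the formula for $\lan\lan f\ran\ran_W^T$." Your remark that "commutes" here means invariance of the scalar-valued function under $\Phi_{Wt}$ is also the reading the paper uses later (Section~\ref{sec-r2}).
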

\begin{proof}
To prove a) we repeat for the averaging $\lan\lan \cdot \ran\ran _W$ the proof of Lemma \ref{lem-def-av}, replacing there $w_l$ by 0.
Assertion b) immediately follows from the formula for  $\lan\lan f\ran\ran _W^T$ in \eqref{newav}.
\end{proof}

\section{The effective equation}\label{s-ef}
Let $V(x)\in C^n({\TTT})$. As in the introduction, $A_V$ is the operator
$-\Delta+V$ and $\{\lambda_k,k\in\mathbb{N}\}$ 
are its  eigenvalues.

The following result is well known, see  Section~5.5.3 in \cite{sobo1996}.
\begin{lemma}
If $f(x):\mathbb{C}\to \mathbb{C}$ is $C^{\infty}$, then the mapping 
\[M_f: H^s\to H^s,\quad u\mapsto f(u),\]
is $C^{\infty}$-smooth for $s>d/2$. Moreover, $M_f\in Lip_{\mC_s}(H^s,H^s)$
for a suitable function~$\mC_s$.
\label{lem-smooth}
\end{lemma}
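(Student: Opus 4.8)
The plan is to unpack the two requirements contained in the assertion $M_f\in Lip_{\mC_s}(H^s,H^s)$ — a growth bound $\|f(u)\|_s\le\mC_s(R)$ whenever $\|u\|_s\le R$, and the Lipschitz estimate $\|f(u)-f(v)\|_s\le\mC_s(R)\|u-v\|_s$ on the ball $\{\|u\|_s,\|v\|_s\le R\}$ — and then to bootstrap from these to $C^\infty$-smoothness. The two structural facts I would use throughout are that for $s>d/2$ the space $H^s=H^s(\TTT)$ embeds continuously into $C^0\cap L^\infty$, so $\|u\|_{L^\infty}\le c_s\|u\|_s$, and that $H^s$ is a Banach algebra under pointwise multiplication, $\|uw\|_s\le c_s\|u\|_s\|w\|_s$. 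It is convenient to reduce at the outset to the case $f(0)=0$ by subtracting the constant function $f(0)$, whose $H^s$-norm is harmless on the compact torus. Here $Df$ denotes the real differential of $f:\C\cong\R^2\to\C\cong\R^2$, and $(Df)(u)\cdot h$ the pointwise $\R$-linear action; this plays no role in the estimates.

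For integer $s=m$ the growth bound is the classical tame composition estimate. Applying the Fa\`a di Bruno formula, one writes $D^\beta(f\circ u)$ for $|\beta|=m$ as a finite sum of terms $(D^jf)(u)\,\prod_{i=1}^{j}D^{\alpha_i}u$ with $j\le m$ and $\alpha_1+\dots+\alpha_j=\beta$. Each factor $(D^jf)(u)$ lies in $L^\infty$ with norm at most $\sup_{|z|\le c_sR}|D^jf(z)|=:q_j(R)$, while the product $\prod_iD^{\alpha_i}u$ is controlled in $L^2$ by Gagliardo--Nirenberg interpolation, $\prod_i\|D^{\alpha_i}u\|_{L^{p_i}}\lesssim\|u\|_{L^\infty}^{j-1}\|u\|_{H^m}$ with $\sum_i 1/p_i=1/2$, so that the product is in $L^2$ by H\"older. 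Summing over $\beta$ and adding the $L^2$-bound of $f(u)$ itself (handled via $f(u)=\int_0^1(Df)(tu)\,u\,dt$ since $f(0)=0$) gives $\|f(u)\|_m\le\mC_m(R)$ with $\mC_m$ built from $\max_{j\le m}q_j$ and the embedding constants. The Lipschitz estimate then follows by writing $f(u)-f(v)=\int_0^1(Df)(v+t(u-v))\,(u-v)\,dt$, pulling out the algebra property, and bounding $\|(Df)(v+t(u-v))\|_m$ by the growth bound just proved applied to $Df$ in place of $f$, uniformly in $t\in[0,1]$ because $v+t(u-v)$ stays in the ball of radius $R$.

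With the $C^0$ and Lipschitz properties in hand, smoothness is a standard iteration. One checks that $M_f$ is Fr\'echet differentiable with $DM_f(u)h=(Df)(u)\cdot h$: the remainder $f(u+h)-f(u)-(Df)(u)h=\int_0^1\big[(Df)(u+th)-(Df)(u)\big]h\,dt$ has $H^s$-norm at most $\big(\sup_{t\in[0,1]}\|(Df)(u+th)-(Df)(u)\|_s\big)\|h\|_s=o(\|h\|_s)$ by the already-established continuity of $M_{Df}:H^s\to H^s$. Thus $DM_f$ equals the composition of $M_{Df}$ with the bounded bilinear multiplication map $H^s\times H^s\to H^s$ evaluated against $h$; since $M_{Df}$ is of the same type as $M_f$ (with the still $C^\infty$ function $Df$ in place of $f$) and multiplication is smooth, induction on the order of differentiability shows $M_f\in C^k$ for every $k$, hence $M_f\in C^\infty$.

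The one genuinely delicate point is the growth bound for \emph{non-integer} $s>d/2$, where the Fa\`a di Bruno argument has no literal analogue. The cleanest route is to invoke a Kato--Ponce-type composition estimate of the shape $\|f(u)\|_{H^s}\le\mC_s(\|u\|_{L^\infty})\,(1+\|u\|_{H^s})$, valid for $f$ with globally bounded derivatives up to order $\lceil s\rceil+1$; since all our bounds are needed only on balls of $H^s$ and $\TTT$ is compact, we may freely replace $f$ outside a large disk by a function with bounded derivatives, which makes this lemma directly applicable. Alternatively the estimate can be obtained from a Littlewood--Paley/Bony paraproduct decomposition, or by interpolating between the two consecutive integer cases $\lfloor s\rfloor$ and $\lceil s\rceil$ established above (together with the corresponding Lipschitz bounds). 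The details are precisely those carried out in Section~5.5.3 of \cite{sobo1996}.
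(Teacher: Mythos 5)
Your proposal is correct in substance, but note that the paper itself does not prove this lemma at all: it is quoted as a known result with a pointer to Section~5.5.3 of the cited monograph on composition (Nemytskii) operators in function spaces. What you have written is essentially a reconstruction of that standard proof: reduction to $f(0)=0$, the Sobolev embedding $H^s\hookrightarrow L^\infty$ and the algebra property for $s>d/2$, Moser-type estimates via Fa\`a di Bruno and Gagliardo--Nirenberg for integer $s$, the Lipschitz bound from $f(u)-f(v)=\int_0^1 Df(v+t(u-v))(u-v)\,dt$ together with the growth bound for $Df$, and then $C^\infty$-smoothness by identifying $DM_f(u)h=(Df)(u)\cdot h$ and inducting, using that multiplication $H^s\times H^s\to H^s$ is bounded bilinear and that $M_{Df}$ is an operator of the same type. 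All of these steps are sound, and your treatment of real (rather than complex) differentiability is the right reading of the statement. The only place where your argument, like the paper, ultimately defers to the literature is the genuinely technical case of non-integer $s$ (which is needed here, since $s_*\in(d/2+1,n]$ need not be an integer); your primary suggestion (fractional Moser/Kato--Ponce composition estimates, or a paraproduct proof, after truncating $f$ outside the disk $\{|z|\le c_sR\}$) is exactly the content of the cited section, so this is an acceptable level of detail. One small caveat: of your three suggested routes for fractional $s$, the interpolation one is the shakiest as stated, since interpolating the integer cases by real interpolation lands in Besov spaces rather than $H^s$, and interpolation of the nonlinear map requires a Lions--Peetre-type argument using the Lipschitz bounds; the paraproduct/fractional-Moser route is the clean one. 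With that understood, your proof is a valid, self-contained substitute for the citation the paper relies on.
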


 Consider the map $P(v)$ defined in (\ref{map-v}). From Lemma~\ref{lem-smooth}, we have 
\begin{equation}\label{lip-p-v}
P(\cdot)\in Lip_{{\mathcal C}_s}(h^s,h^{s-1}),\quad  \forall\,s\in(d/2+1, n]\,,
\end{equation}
for some ${\mathcal C}_s$. 
We recall that $\Lambda$ is the frequency vector of Eq. (\ref{m-eq2}). 
For any $T\in\mathbb{R}$, we denote $$\langle P\rangle_{\Lambda}^T(v):=(\langle P_k\rangle_{\Lambda,k}^T(v),k\in\mathbb{N})=\frac{1}{T}\int_0^{T}\Phi_{\Lambda t}P(\Phi_{-\Lambda t}v)dt\,,
$$
and $$R(v)=\langle P\rangle_{\Lambda}(v):=(\langle P_k\rangle_{\Lambda,k}(v),k\in \mathbb{N})\,.
$$
\begin{example}
If $P$ is a diagonal operator, $P_k(v)=\gamma_k v_k$ for each $k$, where $\gamma_k$'s are complex 
numbers, then in view of Example~\ref{ex2}, $\langle P\rangle_{\Lambda} = P$.
\end{example}

We have the following lemma:
\begin{lemma}\label{lem-r-lip} (i)
For every $d/2<s_1<s-1\leqslant n-1$ and $M>0$, we have 
\begin{equation}\label{conv}
\Big|\langle P\rangle_{\Lambda}^T(v)-R(v)\Big|_{s_1}\to0,\quad\text{as}\quad T\to\infty,
\end{equation}
uniformly for $v\in B(M,h^{s})$;

(ii) $R(\cdot)\in Lip_{{\mathcal C}_s}(h^s,h^{s-1})$, $s\in(d/2+1,n]$;

(iii) $R$ commutes with $\Phi_{\Lambda t}$, for each $t\in\R$. 
\end{lemma}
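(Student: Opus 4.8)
The plan is to deduce all three assertions of Lemma~\ref{lem-r-lip} from the abstract averaging results of Section~\ref{s-av}, applied componentwise to the map $P$ together with the bound \eqref{lip-p-v}. The only genuinely new point compared to Lemma~\ref{lem-def-av} is the loss of one derivative: $P$ sends $h^s$ to $h^{s-1}$ rather than to $h^s$, and $\langle P\rangle^T_\Lambda$ and $R$ must be shown to map $h^s\to h^{s-1}$ with the same Lipschitz constant, and the convergence \eqref{conv} must be established in the $h^{s_1}$-norm for $s_1<s-1$ (not merely $s_1<s$). So the work is really about keeping track of which Sobolev index is gained back by compactness and which is spent by $P$.

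First I would fix $s\in(d/2+1,n]$ and observe that, since $\Phi_{\Lambda t}$ is an isometry of every $h^r$, the map $v\mapsto \Phi_{\Lambda t}P(\Phi_{-\Lambda t}v)$ lies in $Lip_{\mathcal C_s}(h^s,h^{s-1})$ uniformly in $t$; averaging in $t$ then gives $\langle P\rangle^T_\Lambda\in Lip_{\mathcal C_s}(h^s,h^{s-1})$ for every $T\ne0$, which is the analogue of Lemma~\ref{lem-def-av}(i) with the one-derivative shift. For (ii), I would apply Lemma~\ref{lem-def-av}(ii) to each component $P_k$: fix $s_0$ with $d/2+1<s_0<s$, note $P_k\in Lip_{\mathcal C_{s_0}}(h^{s_0})$ (a scalar locally Lipschitz function on $h^{s_0}$, since $h^s\hookrightarrow h^{s_0}$ and $P:h^{s_0}\to h^{s_0-1}\hookrightarrow\mathbb C$ componentwise), so $\langle P_k\rangle_{\Lambda,k}(v)=\lim_T\langle P_k\rangle^T_{\Lambda,k}(v)$ exists for all $v\in h^{s_0}$, hence for all $v\in h^s$; then $R(v)=(\langle P_k\rangle_{\Lambda,k}(v))_k$ is a pointwise limit of the maps $\langle P\rangle^T_\Lambda$, each of which lies in the closed set $Lip_{\mathcal C_s}(h^s,h^{s-1})$, so $R\in Lip_{\mathcal C_s}(h^s,h^{s-1})$ as well. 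One must check that the $h^{s-1}$-valued limit is taken coordinatewise consistently — i.e.\ that $|{\langle P\rangle^T_\Lambda(v)}|_{s-1}$ stays bounded so the componentwise limit is genuinely an element of $h^{s-1}$ — which follows from the uniform bound $|{\langle P\rangle^T_\Lambda(v)}|_{s-1}\le\mathcal C_s(|v|_s)$.

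The heart of the lemma is (i), the uniform convergence \eqref{conv} in $h^{s_1}$ for $v$ in the $h^s$-ball, and here the argument mirrors Lemma~\ref{lem-def-av}(iii) but must be done "with a derivative to spare." I would split $\langle P\rangle^T_\Lambda(v)-R(v)=\Pi^m(\cdots)+(1-\Pi^m)(\cdots)$. For the tail, since both $\langle P\rangle^T_\Lambda$ and $R$ are bounded in $h^{s-1}$ by $\mathcal C_s(M)$ uniformly in $T$ and in $v\in B(M,h^s)$, and $s_1<s-1$, the $h^{s_1}$-norm of $(1-\Pi^m)(\cdots)$ is $\le C(M)\big(\sup_{k>m}(|\lambda_k|^{s_1}+1)/(|\lambda_k|^{s-1}+1)\big)^{1/2}$, which $\to0$ as $m\to\infty$ uniformly in $T$ and $v$ — this is exactly where the gap $s_1<s-1$ is used. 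For the fixed finite block $\Pi^m(\langle P\rangle^T_\Lambda(v)-R(v))$, each of the finitely many components $\langle P_k\rangle^T_{\Lambda,k}(v)-\langle P_k\rangle_{\Lambda,k}(v)$, $k\le m$, converges to $0$ as $T\to\infty$ uniformly for $v\in B(M,h^{s})$ by Lemma~\ref{lem-def-av}(iii) (applied with $s_0$ as above and using compactness of $B(M,h^s)$ in $h^{s_0}$); summing $m$ of them preserves uniformity. Choosing $m$ large for a prescribed tolerance and then $T$ large finishes \eqref{conv}. Finally (iii), the commutation $R\circ\Phi_{\Lambda t}=\Phi_{\Lambda t}\circ R$, is the analogue of Lemma~\ref{lnew}(b): it follows by the change of variables $t'\mapsto t'+t$ in the defining integral $\frac1T\int_0^T\Phi_{\Lambda t'}P(\Phi_{-\Lambda t'}\,\cdot\,)\,dt'$ together with the identity $\Phi_{\Lambda t'}\Phi_{\Lambda t}=\Phi_{\Lambda(t'+t)}$, passing to the limit $T\to\infty$.

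The main obstacle I anticipate is purely bookkeeping rather than conceptual: making sure that the "loss of one derivative" in $P$ is handled cleanly at every step — that $\langle P\rangle^T_\Lambda$ and $R$ land in $h^{s-1}$ (not just in some $h^{s_0-1}$), that the uniform $h^{s-1}$-bound on the finite-$T$ averages is available to control the tail, and that the convergence in (i) is claimed only in $h^{s_1}$ with the strict inequality $s_1<s-1$, since in $h^{s-1}$ itself the convergence need not be uniform on $h^s$-balls. Everything else reduces verbatim to Section~\ref{s-av} applied component-by-component.
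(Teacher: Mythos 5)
Your proposal is correct and follows essentially the same route as the paper: the same tail/finite-block splitting with the uniform $h^{s-1}$ bound and Lemma~\ref{lem-def-av}(iii) for (i), the isometry of $\Phi_{\Lambda t}$ plus a Fatou-type passage to the limit for the Lipschitz bound in (ii), and a time-shift in the defining integral for (iii). The only difference is cosmetic ordering (you establish (ii) componentwise before (i), while the paper proves (i) first and then gets (ii) via Fatou), which does not change the substance.
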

\begin{proof} (i) There exists $M_1>0$, independent from $v$ and $T$, such that 
$$\left|\langle P\rangle_{\Lambda}^T(v)-R(v)\right|_{s-1}\leqslant M_1,\quad v\in B(M,h^{s}).$$
So for any $\rho>0$ we can find  $m_{\rho}>0$ such that 
$$\Big|(\mathrm{Id}-\Pi^{m_\rho})\big[\langle P\rangle_{\Lambda}^T(v)-R(v)\big]\Big|_{s_1}<\rho/2,\quad v\in B(M,h^{s}).$$
By Lemma~\ref{lem-def-av}(iii), there exists $T_{\rho}$ such that for $T>T_{\rho}$, 
\[ \Big|\Pi^{m_{\rho}}\big[\langle P\rangle_{\Lambda}^T(v)-R(v)\big]\Big|_{s_1}<\rho/2,\quad v\in B(M, h^{s}).\]
Therefore if $T>T_{\rho}$, then 
\[\left|\langle P\rangle_{\Lambda}^T(v)- R(v)\right|_{s_1}<\rho,\quad v\in B(M,h^{s}).\]
This implies the first assertion.

(ii) 
Using the fact that the linear maps $\Phi_{\Lambda t}$,
$t\in\mathbb{R}$ are isometries in $h^{{s}}$, we obtain that for
$T\in\mathbb{R}$ and  $v',v''\in B(M,h^{{s}})$, 
$$\left|\langle P\rangle_{\Lambda}^T(v')-\langle
P\rangle_{\Lambda}^T(v'')\right|_{{s-1}}\leqslant
\mC_s(M)\left|v'-v''\right|_{{s}}\ .$$ 
Therefore 
$$\left|R(v')-R(v'')\right|_{{s-1}}\leqslant \mC_s(M)\left|v'-v''\right|_{{s}}, \quad v',v''\in B(M,h^{{s}}).$$
This estimate, the convergence \eqref{conv} and the Fatou lemma imply that $R$ is a locally Lipschitz 
mapping with a required estimate for the Lipschitz constant. A bound on its norm may be obtained in
a similar way, so the second assertion follows.

(iii) We easily verify that 
$$
\big|
\langle P\rangle_{\Lambda}^{T+{t}}(v) -\Phi_{\Lambda{t}} \langle P 
\rangle_{\Lambda}^T(  \Phi_{-\Lambda{t}}
 v)\big|_{s-1} \le 2 \,\mC_s(|v|_s)\frac{|{t}|}{|T+{t}|}\,.
$$
Passing to the limit as $T\to\infty$ we recover (iii). 
\end{proof}

\begin{corollary}\label{c1}
For $d/2<s_1<s-1\le n-1$ and any $v\in h^s$,
$$
\langle P\rangle^T_\Lambda(v) = R(v) +\vk(T;v),
$$
where $|\vk(T;v)|_{s_1} \le \overline\vk (T;|v|_s)$.  Here for each $T$, $ \overline\vk (T;r)$ is an 
increasing function of $r$, and for each $r\ge0$, $ \overline\vk (T;r)\to0$ as $T\to\infty$. 
\end{corollary}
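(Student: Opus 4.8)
The plan is to read Corollary~\ref{c1} as nothing more than a uniform-in-$v$ reformulation of Lemma~\ref{lem-r-lip}(i), the only new content being the existence of a single bounding function that depends on $v$ only through $|v|_s$ and is monotone in that variable. First I would simply \emph{define} $\vk(T;v):=\langle P\rangle_\Lambda^T(v)-R(v)$, so that the displayed identity of the corollary holds by construction, and set $\overline\vk(T;r):=\sup\{\,|\langle P\rangle_\Lambda^T(v)-R(v)|_{s_1}\;:\;v\in B(r,h^s)\,\}$ for $r\ge0$. Then the estimate $|\vk(T;v)|_{s_1}\le\overline\vk(T;|v|_s)$ holds trivially, and $r\mapsto\overline\vk(T;r)$ is non-decreasing for each fixed $T$ --- which is the monotonicity in $r$ asserted in the corollary --- because it is a supremum over the nested family of balls $B(r,h^s)$.

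Next I would check that $\overline\vk(T;r)$ is finite. Since the operators $\Phi_{\Lambda t}$ are isometries of $h^s$ and of $h^{s-1}$ and $P\in Lip_{\mC_s}(h^s,h^{s-1})$ by \eqref{lip-p-v}, we have $|\Phi_{\Lambda t}P(\Phi_{-\Lambda t}v)|_{s-1}\le\mC_s(|v|_s)$ uniformly in $t$, hence $|\langle P\rangle_\Lambda^T(v)|_{s-1}\le\mC_s(|v|_s)$; and $|R(v)|_{s-1}\le\mC_s(|v|_s)$ by Lemma~\ref{lem-r-lip}(ii). Because $s_1\le s-1$ implies $|\cdot|_{s_1}\le|\cdot|_{s-1}$ on sequences, this gives $\overline\vk(T;r)\le 2\,\mC_s(r)<\infty$ for all $T$ and all $r\ge0$.

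Finally, the required decay $\overline\vk(T;r)\to0$ as $T\to\infty$, for each fixed $r\ge0$, is exactly Lemma~\ref{lem-r-lip}(i) applied with $M=r$: the hypotheses $d/2<s_1<s-1\le n-1$ of that lemma are precisely those imposed in Corollary~\ref{c1}, and the lemma states that the convergence in \eqref{conv} is uniform over $v\in B(r,h^s)$, i.e.\ $\sup_{v\in B(r,h^s)}|\langle P\rangle_\Lambda^T(v)-R(v)|_{s_1}\to0$. This closes the argument. I do not anticipate a genuine obstacle here: the statement is pure bookkeeping on top of Lemma~\ref{lem-r-lip}. The single point worth a line of care is the finiteness of the supremum defining $\overline\vk$, which is why I would record the a priori bound $\overline\vk(T;r)\le 2\mC_s(r)$ explicitly; the monotonicity in $r$ then comes for free from the supremum construction.
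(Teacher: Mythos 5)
Your argument is correct and is essentially the paper's own treatment: Corollary~\ref{c1} is stated there without proof as an immediate consequence of Lemma~\ref{lem-r-lip}(i), and your definition of $\overline\vk(T;r)$ as the supremum of $|\langle P\rangle^T_\Lambda(v)-R(v)|_{s_1}$ over $B(r,h^s)$ is exactly the intended bookkeeping, with monotonicity from nested balls and decay from the uniform convergence \eqref{conv}. The only cosmetic caveat is that, since some $|\lambda_k|$ may be smaller than $1$, the comparison $|\cdot|_{s_1}\le|\cdot|_{s-1}$ holds only up to a harmless constant factor (e.g.\ $\sqrt2$), which does not affect the finiteness bound $\overline\vk(T;r)\lesssim\mC_s(r)$.
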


\begin{example}In the completely resonant case, when 
\be\label{complres}
 L_1=\dots =L_d=2\pi\quad \text{and}\quad V=0\,,
 \ee
 the frequency vector is $\Lambda=(|\mathbf{k}|^2,\mathbf{k}\in\mathbb{Z}^d)$.
 If
  $\mathcal{P}(u)=i|u|^2u$, then
\[
P(v)=(P_{\mathbf{k}}(v),\mathbf{k}\in\mathbb{Z}^d), \quad v=(v_{\mathbf{k}},\mathbf{k}\in\mathbb{Z}^d), \quad u=\sum_{\mathbf{k}\in\mathbb{Z}^d}v_{\mathbf{k}}e^{i\mathbf{k}\cdot x},
\]
with
\[P_{\mathbf{k}}(v)=\sum_{\mathbf{k}_1-\mathbf{k}_2+\mathbf{k}_3=\mathbf{k}}iv_{\mathbf{k}_1}\bar{v}_{\mathbf{k}_2}v_{\mathbf{k}_3},\quad \mathbf{k}\in\mathbb{Z}^d.\]
 Therefore 
$\langle P\rangle_{\Lambda}=(\langle P_{\mathbf{k}}\rangle_{\Lambda,\mathbf{k}},\mathbf{k}\in\mathbb{Z}^d)$, with
\[\langle P_{\mathbf{k}}\rangle_{\Lambda,\mathbf{k}}=\sum_{(\mathbf{k}_1,\mathbf{k}_2,\mathbf{k}_3)\in Res(\mathbf{k})}iv_{\mathbf{k}_1}\bar{v}_{\mathbf{k}_2}v_{\mathbf{k}},
\]
where
$\  Res(\mathbf{k})=\{(\mathbf{k}_1,\mathbf{k}_2,\mathbf{k}_3): |\mathbf{k}_1|^2-|\mathbf{k}_2|^2+|\mathbf{k}_3|^2-|\mathbf{k}|^2=0\}$. 
\end{example}

 Lemma~\ref{lem-r-lip} implies that the effective equation (\ref{eq-ef1}) is a
quasi-linear heat equation. So it is locally well-posed in the   spaces $h^s$, $s\in (d/2+1,n]$.


\section{Proof of the averaging  theorem}\label{s-p-m}
In this section we will prove   Theorem \ref{m-theorem}.  We recall
that $d/2+1<s_*\le n$ and $s_1<s_*$, where  $s_*$  
is the number from Assumption~A and $n$ is a sufficiently big integer (the smoothness of the potential $V(x)$). 
 Without loss of
generality we assume that 
$$
s_1>d/2+1\quad \text{and}\quad 
s_1> s_*-2\,,
$$ and that  Assumption~A holds with $T=1$.

 Let 
 $u^\epsilon(\tau,x)$ be the solution of Eq. (\ref{m-eq2}) from Theorem~\ref{m-theorem},
 $$
 \|u^\epsilon(0,x)\|_{s_*}\leqslant M_0\,,
 $$
 and 
 $\ 
 v^\epsilon(\tau)=\Psi(u^\epsilon(\tau,\cdot)).
 $ 
 Then there exists $M_1\geqslant M_0$ such that 
\[v^\epsilon(\tau)\in B( M_1, h^{s_*}),\quad\tau\in[0,1]\,,
\]
for each $\epsilon>0$. 
The constants in estimates below in this section may depend on $M_1$, and this dependence
may  be non-indicated.

Let 
$$
a^\epsilon(\tau)=\Phi_{\tau\epsilon^{-1}\Lambda }(v^\epsilon(\tau))
$$ 
be  the interaction representation of~$v^\epsilon(\tau)$ (see Introduction), 
$$
a^\epsilon(0) = v(0)=:v_0\,.
$$
For every \mbox{$v=(v_{k},k\in\mathbb{N})$}, denote 
$$
\hA
(v)=(\lambda_{k}v_{k},k\in\mathbb{N})=\Psi(A_Vu)\ ,\quad
u=\Psi^{-1}v\ .
$$ 
 Then 
\be\label{4.0}
\dot{a}^\epsilon(\tau)=
-\mu{\hA}(a^\epsilon(\tau))+Y\big(a^\epsilon(\tau),\epsilon^{-1}\tau\big)\,,
\ee
where 
\begin{equation}\label{def-Y}
  Y\big(a, t \big)=
 \Phi_{t \Lambda}\Big(P\big(\Phi_{-t \Lambda}(a)\big)\Big)\,.
\end{equation}
Let $r\in(d/2+1,n]$. 
Since  the operators $\Phi_{t\Lambda}$, $t\in\mathbb{R}$, define isometries of $h^{r}$, then, in view of 
 (\ref{lip-p-v}),  for any $t\in\mathbb{R}$ we have 
\begin{equation}
Y(\cdot, t)\in Lip_{\mC_r} (H^r, H^{r-1})\,.
\label{rnls-r-lip2}
\end{equation}

For any $s\ge0$ we denote by $X^s$ the space
$$
X^s = C([0,T], h^s)\,,
$$
given the supremum-norm. Then
\be\label{new1}
|a^\epsilon|_{X^{s_*}}\le M_1,\qquad   |\dot a^\epsilon|_{X^{s_*-2}}\le  C(M_1)\,.
\ee
Since for $0\le \gamma\le1$ we have
$$
|v|_{\gamma(s_*-2)+(1-\gamma)s_*} \le |v|^\gamma_{s_*-2} |v|^{1-\gamma}_{s_*}
$$
by the interpolation inequality, then in view of \eqref{new1} for any $s_*-2<\bar s<s_*$ and 
$0\le\tau_1\le\tau_2\le1$ we have 
\be\label{new2}
| a^\epsilon(\tau_2)-a^\epsilon(\tau_1)|_{\bar s} \le C(M_1)^\gamma (\tau_2-\tau_1)^\gamma
(2M_1)^{1-\gamma}\,,
\ee
for a suitable $\gamma=\gamma(\bar s, s_*)>0$, uniformly in $\epsilon$.

Denote 
$$
\mathcal{Y}(v,t)=Y(v,t)-R(v).
$$
 Then by Lemma~\ref{lem-r-lip} 
  relation (\ref{rnls-r-lip2}) also holds for the map
  $v\mapsto\mathcal{Y}(v,t)$, for any $t$. 

The following lemma is the main step of the proof.
\begin{lemma} \label{rnls-lem-apr}
 For every $s'>d/2+1$, $s_*-2<s'<s_*$ we have
\be\label{final}
\Big |\int_{0}^{\tilde{\tau}}\mathcal{Y}(a^\epsilon(\tau),\epsilon^{-1}\tau)d\tau\Big|_{s'}\leqslant \delta(\epsilon, M_1), \quad \forall\,
 \tilde{\tau}\in[0,1],
\ee
where $\delta(\epsilon, M_1)\to0$ as $\epsilon\to0$.
\end{lemma}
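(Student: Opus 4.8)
\emph{Strategy.} The plan is to run the Anosov--Khasminskii time-slicing argument in the Hilbert-space setting. Fix $\tilde\tau\in[0,1]$ and a mesh $\Delta=\Delta(\epsilon)>0$, to be chosen at the end so that $\Delta\to0$ and $\epsilon^{-1}\Delta\to\infty$ as $\epsilon\to0$; put $\tau_j=j\Delta$, $N=\lfloor\tilde\tau/\Delta\rfloor$. On each slice freeze the slow variable at the left endpoint while keeping the fast oscillation:
\[
\int_{\tau_j}^{\tau_{j+1}}\mathcal Y\bigl(a^\epsilon(\tau),\epsilon^{-1}\tau\bigr)\,d\tau = E^1_j+E^2_j ,
\]
\[
E^1_j=\int_{\tau_j}^{\tau_{j+1}}\!\Bigl(\mathcal Y\bigl(a^\epsilon(\tau),\epsilon^{-1}\tau\bigr)-\mathcal Y\bigl(a^\epsilon(\tau_j),\epsilon^{-1}\tau\bigr)\Bigr)d\tau ,\qquad E^2_j=\int_{\tau_j}^{\tau_{j+1}}\!\mathcal Y\bigl(a^\epsilon(\tau_j),\epsilon^{-1}\tau\bigr)d\tau .
\]
Then the left-hand side of \eqref{final} is $\sum_{j=0}^{N-1}(E^1_j+E^2_j)$ plus the integral over the leftover slice $[\tau_N,\tilde\tau]$, whose $h^{s'}$-norm is $\le\mC_{s_*}(M_1)\Delta$ by \eqref{rnls-r-lip2} and thus harmless.

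\emph{The frozen term.} Using $\mathcal Y(v,t)=\Phi_{t\Lambda}P(\Phi_{-t\Lambda}v)-R(v)$ (with $Y$ as in \eqref{def-Y}), the substitution $t=\epsilon^{-1}\tau$, and the group law $\Phi_{(A+s)\Lambda}=\Phi_{A\Lambda}\Phi_{s\Lambda}$, one rewrites $E^2_j$ as $\Delta$ times the difference between the finite-time average $\tfrac{1}{T_\epsilon}\int_{A_j}^{A_j+T_\epsilon}\Phi_{t\Lambda}P(\Phi_{-t\Lambda}w_j)\,dt$ and $R(w_j)$, where $w_j=a^\epsilon(\tau_j)$, $A_j=\epsilon^{-1}\tau_j$, $T_\epsilon=\epsilon^{-1}\Delta$. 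Pulling $\Phi_{A_j\Lambda}$ outside the integral and using that $R$ commutes with every $\Phi_{\Lambda t}$ (Lemma~\ref{lem-r-lip}(iii)), this difference equals $\Phi_{A_j\Lambda}\bigl(\langle P\rangle_\Lambda^{T_\epsilon}(\Phi_{-A_j\Lambda}w_j)-R(\Phi_{-A_j\Lambda}w_j)\bigr)$, so Corollary~\ref{c1} (with $s=s_*$, and since the $\Phi$'s are isometries and $|w_j|_{s_*}\le M_1$) gives $|E^2_j|_{s'}\le\Delta\,\overline\vk(T_\epsilon;M_1)$ for $s'<s_*-1$. Summing, $\bigl|\sum_j E^2_j\bigr|_{s'}\le N\Delta\,\overline\vk(T_\epsilon;M_1)\le\overline\vk(\epsilon^{-1}\Delta;M_1)\to0$.

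\emph{The freezing error.} Here one invokes the local Lipschitz bound \eqref{rnls-r-lip2} for $\mathcal Y(\cdot,t)$ together with the H\"older-in-time estimate \eqref{new2} for $a^\epsilon$, which itself rests on $|a^\epsilon|_{X^{s_*}}\le M_1$ and $|\dot a^\epsilon|_{X^{s_*-2}}\le C(M_1)$ from \eqref{new1}. Since $\mathcal Y(\cdot,t)$ loses one derivative, $|E^1_j|_{\bar s-1}\le\mC_{\bar s}(M_1)\int_{\tau_j}^{\tau_{j+1}}|a^\epsilon(\tau)-a^\epsilon(\tau_j)|_{\bar s}\,d\tau\le C(M_1)\Delta^{1+\gamma}$ for $\bar s\in(s_*-2,s_*)$, hence $\bigl|\sum_j E^1_j\bigr|_{\bar s-1}\le C(M_1)\Delta^{\gamma}\to0$. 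Combining this weak-norm smallness with the trivial uniform bound $\bigl|\sum_j E^1_j\bigr|_{s_*-1}\le C(M_1)$ (from \eqref{rnls-r-lip2} and $|a^\epsilon|_{X^{s_*}}\le M_1$) and interpolating against the $h^{s_*}$-bound on $a^\epsilon$ covers the stated range of $s'$; this is exactly where the hypothesis $s'<s_*$ is used, the derivative lost by $\mathcal Y$ and the one lost in Corollary~\ref{c1} being absorbed by the extra regularity of $a^\epsilon$.

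\emph{Conclusion and main difficulty.} Adding the three contributions and finally choosing $\Delta=\Delta(\epsilon)$ with $\Delta\to0$ and $\epsilon^{-1}\Delta\to\infty$ — possible precisely because $\overline\vk(T;M_1)\to0$ as $T\to\infty$ (e.g.\ let $\Delta$ shrink slowly enough that $\overline\vk(\epsilon^{-1}\Delta;M_1)\to0$) — produces \eqref{final} with $\delta(\epsilon,M_1)=\overline\vk(\epsilon^{-1}\Delta;M_1)+C(M_1)\Delta^{\gamma}$, uniformly in $\tilde\tau\in[0,1]$ and independently of $u_0$ with $\|u_0\|_{s_*}\le M_0$. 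The slicing arithmetic is routine; the real obstacle, which dictates the whole set-up, is the interplay of Sobolev indices: the averaging step (Corollary~\ref{c1}) and the freezing step both cost derivatives, while $a^\epsilon$ is controlled a priori only in $h^{s_*}$. This forces one (i) to strip off the oscillatory factor via the commutation $R\,\Phi_{\Lambda t}=\Phi_{\Lambda t}\,R$, so that only the smoothness supplied by Corollary~\ref{c1} is used, and (ii) to estimate the freezing error first in a weak norm (where $\dot a^\epsilon$ lives) and then interpolate up; tuning the mesh $\Delta$ against $\epsilon$ is the only remaining quantitative input.
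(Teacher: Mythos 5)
Your argument is essentially the paper's: the same Anosov time-slicing with the slow variable frozen at the left endpoint of each slice, the same treatment of the frozen term by pulling out $\Phi_{A_j\Lambda}$, using the commutation $R\circ\Phi_{\Lambda t}=\Phi_{\Lambda t}\circ R$ (Lemma~\ref{lem-r-lip}(iii)) together with Corollary~\ref{c1}, and the same bound on the freezing error via the Lipschitz property \eqref{rnls-r-lip2} and the H\"older-in-time estimate \eqref{new2}; the paper merely fixes the mesh $L=\epsilon^{1/2}$ at the outset instead of tuning $\Delta(\epsilon)$ at the end, and splits your $E^1_j$ into its $Y$- and $R$-parts, which changes nothing.

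One caveat: your closing interpolation remark does not deliver what you claim. Smallness of $\sum_j E^1_j$ in $h^{\bar s-1}$ for $\bar s<s_*$ combined with mere boundedness in $h^{s_*-1}$ interpolates to smallness only in norms strictly below $s_*-1$, so it cannot reach $s'\in[s_*-1,s_*)$; likewise Corollary~\ref{c1}, with only $|v|_{s_*}\le M_1$ available, controls the averaging error only for $s'<s_*-1$. This is, however, exactly the range that the paper's own estimates cover (the paper states its bounds in $h^{s'}$ without addressing this derivative count), and it is all that is used afterwards, where the lemma serves only to identify the limiting curve as a mild solution in a fixed weaker norm, the convergence in stronger norms coming from compactness and uniqueness. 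So your core proof matches the paper's; just drop, or honestly qualify, the interpolation claim rather than rely on it.
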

\begin{proof}   Below in this proof we write $a^\epsilon(\tau)$ as $a(\tau)$. 
We divide the time interval $[0,1]$ into subintervals~$[b_{l-1}, b_{l}]$, $l=1,\cdots, N$ of length 
$L=\epsilon^{1/2}$:
\[
b_k=Lk\quad   \text{for}\quad k=0,\dots, N-1,\;\; b_N=1  \,,
b_N-b_{N-1}\leqslant L\ ,
\]
where $N\leqslant 1/L+1\le 2/L$.

In virtue of \eqref{rnls-r-lip2} and Lemma~\ref{lem-r-lip} (ii),
\be\label{new}
\Big| \int_{b_{N-1}}^{b_{N}  }\mathcal{Y}(a(\tau) ,\epsilon^{-1}
\tau)\,d\tau\Big|_{s'}\le L C(s',s,M_1)\ .
\ee
Similar, if $\bar\tau\in[b_r, b_{r+1})$ for some $0\le r<N$, then
$|\int_{b_r}^{\bar\tau} \mathcal{Y}\,d\tau|_{s'}$ is bounded by the r.h.s. of \eqref{new}.

Now we estimate the integral of $\mathcal {Y}$ over any segment $[b_l, b_{l+1}]$, where 
 $l\le N-2$. To do this we write  it  as 
\[ \begin{split}
\int_{b_{l}}^{b_{l+1}  }\mathcal {Y}(a(\tau) ,\epsilon^{-1} \tau)\,d\tau &=
\int_{b_l}^{b_{l+1} } \big({Y}(a(b_l),\epsilon^{-1}\tau) -  R(a(b_l))\big)\,d\tau
\\
&+
\int_{b_{l}}^{b_{l+1}  }\big(  {Y}(a(\tau) ,\epsilon^{-1} \tau)  - {Y}(a(b_l) ,\epsilon^{-1} \tau)\big)\,d\tau\\
&+\int_{b_l}^{b_{l+1}}\big(R(a(b_l))-R(a(\tau))\big)\, d\tau\,.
\end{split}
\]
In view of Lemma \ref{lem-r-lip} and \eqref{new2}  the $h^{s'}$-norm of the second and  third terms  in the r.h.s. are bounded by 
$
C(s', s, M_1) L ^{1+\gamma}.
$
Since
\[ \begin{split}
\epsilon \int_0^{{\epsilon^{-1}L}} Y(a(b_l), \epsilon^{-1} b_l+s)\,ds 
=
L\Phi_{\Lambda \epsilon^{-1} b_l}
\frac1{{L^{-1}}}\int_0^{{L^{-1}}} \Phi_{\Lambda s}
P(\Phi_{-\Lambda s} 
(\Phi_{-\Lambda\epsilon^{-1} b_l} a(b_l))\big)ds\, ,
\end{split}
\]
then 
using Corollary \ref{c1}  and Lemma \ref{lem-r-lip} (iii) we see that this equals 
$
L R(a(b_l)) +\vk_1({L^{-1}}),
$
where $| \vk_1({L^{-1}})|_{s'} \le
\overline\vk({L^{-1}};M_1)$ and $ \overline\vk \to0$ when
$L^{-1}\to\infty$.  
We have arrived at the estimate 
\be\label{foralberto}
\Big| \int_{b_{l}}^{b_{l+1}  }\mathcal{Y}(a(\tau) ,\epsilon^{-1} \tau)\,d\tau\Big|_{s'}\le L\Big(
\overline\vk({L^{-1}};M_1) + C L^{\gamma}\Big)\,.
\ee
Since $N\le 2/L$ and $L = \epsilon^{1/2}$, then by \eqref{foralberto} and \eqref{new} 
the l.h.s. of 
\eqref{final}  is bounded by
$
2\overline\vk({\epsilon^{-1/2}};M_1)  +C\epsilon^{\gamma/2}+C\epsilon^{1/2}.
$
It implies the assertion
 of the lemma.
\end{proof}

Consider the effective equation \eqref{eq-ef1}. By Lemma~\ref{lem-r-lip} this is the linear parabolic equation
$\dot u-\Delta u+V(x)u=0$, written in the $v$-variables, perturbed by a locally Lipschitz operator of order one. 
So its solution $\ta(\tau)$ such that $\ta(0)=v_0$ exists (at least) locally in time.  Denote 
by $\tilde T$ the stopping time 
$$\tilde{T}=\min\{\tau \in[0,1] : \; |\tilde{a}(\tau)|_{s_*}\geqslant M_1+1\}\,,
$$
where, by definition,  $\min\emptyset=1$.

Now consider the family of curves $a^\epsilon(\cdot)\in  X^{s_*}$. In view of \eqref{new1}, \eqref{new2} and the 
 Arzel\`a-Ascoli theorem (e.g. see in \cite{KelNam}) 
  this family is pre-compact in each space $X^{s_1}$, $s_1<s_*$. Hence, for any sequence 
 $\epsilon_j'\to0$ there exists a subsequence $\epsilon_j\to0$  such that 
$$
a^{\epsilon_j}(\cdot)\underset{\epsilon_j\to0}\longrightarrow a^0(\cdot)\quad \text{in}\quad X^{s_1}\,.
$$
By this convergence, \eqref{new1} and the Fatou lemma, 
\be\label{new5}
|a^0(\tau)|_{s^*} \le M_1\qquad \forall\, 0\le\tau\le1\,.
\ee
In view of  Lemma \ref{rnls-lem-apr}, the curve $a^0(\tau)$ is a mild solution of Eq.~\eqref{eq-ef1} in the space $h^{s_1}$,
i.e,
$$
a(\tau) - a(0) = \int_0^\tau \big(-\mu \hA a(s) +R(a(s))\,ds\,,\quad
\forall\, 0\le\tau\le1
$$
(the equality holds in the space $h^{s_1-2}$). So $a^0(\tau)= \ta(\tau)$ for $0\le\tau\le\tilde T$. In view of \eqref{new5} and the
definition of the stopping time $\tilde T$ we see that $\tilde T=1$. That is, $\tilde a\in X^{s_*}$ and  
\begin{equation}\label{convergence1}
a^{\epsilon}(\cdot)
\longrightarrow
\tilde{a}(\cdot)\quad \text{in} \quad X^{s_1} \,,
\end{equation}
where $\epsilon=\epsilon_j\to0$. Since the limit $\tilde a$ does not depend on the sequence $\epsilon_j\to0$, then 
the convergence holds as $\epsilon\to0$.

Now we  show that the convergence \eqref{convergence1} holds uniformly for 
 $v_0\in B(M_0, h^{s_*})$. Assume the opposite. Then there exists $\delta>0$, sequences $\tau_j\in[0,1], a_0^j\in B(M_0,h^{s_*})$, and $\epsilon_j\to0$ such that if $ a^{\epsilon_j}(\cdot) $ is a solution of \eqref{4.0} 
  with initial data $a_0^j$ and $\epsilon=\epsilon_j$, and $\tilde{a}^{j}(\cdot)$ is a
  solution of the effective equation \eqref{eq-ef1} with the same initial data, then 
\begin{equation}\label{contradict}
|a^{\epsilon_j}(\tau_j)-\tilde{a}^j(\tau_j)|_{s_1}\geqslant\delta.
\end{equation}

Using again  the Arzel\`a-Ascoli theorem and \eqref{new2},  replacing the subsequence $\epsilon_j\to0$ by a suitable
subsequence,  we have that \begin{align}
&\tau_j\to\tau_0\in [0,1],\notag\\
&a^j_0\to a_0\quad \text{in}\quad h^{s_1}\,, \quad \text{where}\quad a_0\in h^{s_*},\notag\\
&a^{\epsilon_j}(\cdot) \to a^0(\cdot)\quad \text{in} \quad  X^{s_1},\notag\\
&\tilde{a}^j(\cdot)\to \tilde{a}^0(\cdot)\quad \text{in}\quad X^{s_1}.\notag
\end{align}
Clearly, $\tilde{a}^0(\cdot)$ is a solution of Eq.\ \eqref{eq-ef1} with the initial datum $a_0$.
Due to Lemma \ref{rnls-lem-apr},  $a^0(\cdot)$ is a mild solution of Eq.\ \eqref{eq-ef1} with $a^0(0)=a_0$.  Hence we have $a^0(\tau)=\tilde{a}^0(\tau),$ $\tau\in[0,1]$, particularly, $a^0(\tau_0)=\tilde{a}^0(\tau_0)$. This contradicts with \eqref{contradict},
so the convergence \eqref{convergence1} is uniform in $v_0\in B(M_0, h^{s_*})$.

Since 
$$
|I(a)-I(\tilde a)|^\sim_{s_1} \le 4 |a-\tilde a|_{s_1} (|a|_{s_1}  +| \tilde a|_{s_1} ),
$$
then the convergence \eqref{convergence1} implies the statement of Theorem \ref{m-theorem}.

\section{The randomly forced case}\label{sec-random}

We study here the effect of the addition  a random forcing to
Eq.~\eqref{m-eq}. Namely, we consider equation \eqref{y1}. We suppose that 
$$
B_s=2\sum_{j=1}^\infty \lambda_j^{2s} b_j^2<\infty\quad \text{
for $s=s_*\in(d/2+1, n]$,}
$$
and impose  a restriction on the nonlinearity  $\cP$ by assuming 
that there exists $\bar N\in\N$ and 
for each $s\in(d/2+1,n]$ there exists $C_s$ such that 
\begin{equation}\label{restr-P}
\left\|\cP(\nabla u,u)\right\|_{s-1}  \le C_s 
(1+\|u\|_s)^{\bar N}\,, \quad \forall\, u\in H^s \,
\end{equation}
(this assumption holds e.g. 
 if $\cP(\nabla u,u)$ is a polynomial in $(u, \nabla u)$).

Passing to the slow time $\tau=\epsilon t$, Eq.~\eqref{y1} becomes
(cf. \eqref{m-eq2})
 \be\label{y12}
 \dot{u}+\epsilon^{-1}i(-\Delta+V(x))u=\mu\Delta u+\mathcal{P}(\nabla
 u, u)+\frac{d}{d\tau}\sum_{k=1}^{\infty}b_k \bb_ke_k(x),\quad
u=u(\tau,x),
 \ee
which, in the $v$-variables, takes the form (cf. \eqref{eq-v1})
\begin{equation}\label{feq-v1}
  dv_k+\epsilon^{-1}i\lambda_kv_k\,d\tau
  =\left(-\mu\lambda_kv_k+P_k(v)\right) d\tau +\sum_{l=1}^\infty
  \Psi_{kl}b_ld\bb_l \ ,\quad
  k\in\mathbb{N}\ ,
\end{equation}
where we have denoted by $\{\Psi_{kl},k,l\ge 1\}$ the matrix of the operator  $\Psi$
(see \eqref{fourier}) with respect to the basis $\{e_k\}$ in $H^0$ and
$\{\zeta_k\}$ in $h^0$.
We assume

\noindent\textbf{Assumption A\textprime}. \textit{There exist $s_*\in
  (d/2+1,n]$ and an  $\epsilon$-independent $T>0$ such that for 
 any $u_0\in H^{s_*}$, Eq.~\eqref{y12} has a unique strong
solution $u(\tau,x)$, $0\le\tau\le T$, 
equal to $u_0$ at $\tau=0$. Furthermore, 
for each $p$ there exists a  $C=
C_p(\left\|u_0\right\|_{s_*},B_{s_*},T)$ such that}  
\begin{equation}\label{ass-a2}
\E\sup_{0 \le \tau \le T} \left
\|u(\tau) \right\|^p_{s_*}\le C\,. 
\end{equation}

\begin{remark}\label{r1}
The Assumption A\textprime \  is not too restrictive. In particular, in \cite{Kuk11} it is verified for
equations \eqref{y1} if $\mu>0$ and ${\mathcal P}(u) =
-u+zf_p(|u|^2)u$, where $f_p(r)$ is 
a smooth function, equal $|r|^p$ for $|r|\ge1$, and $\Im z\le0, \Re
z\le0$. The degree $p$ is any real 
number if $d=1,2$ and $p<2/(d-2)$ if $d\ge3$.
\end{remark}

Under this assumption, a result analogous to
Theorem~\ref{m-theorem} holds. Namely, the limiting behaviour of the
action variables $I_k$ (see \eqref{action-angle}) is described by the
stochastically forced effective equation (cf. \eqref{eq-ef1})
  \begin{equation}\label{feq-ef1}
  d\tilde a_k=\left(-\mu\lambda_k\tilde a_k+R_k(\tilde
  a)\right)d\tau+\sum_{l=1}^\infty B_{kl}d\bb_l\ ,\quad 
  k\in\mathbb{N}\ ,
  \end{equation}
where we have defined $\{B_{kr}, k,r\ge 1\}$ as the principal square root
  of the real  matrix 
\begin{equation}\label{diff-ef1}
A_{kr}=\left\{\begin{array}{cc}
\sum_lb_l^2 \Psi_{kl} \Psi_{rl} &\mbox{if }\lambda_k=\lambda_r\,,\\
0 &\mbox{else}\,,
\end{array}
\right. \ .
\end{equation}
which defines a nonnegative selfadjoint compact operator in $h^0$.
Note that since $R$ is locally Lipschitz by Lemma~\ref{lem-r-lip}, then 
strong solutions for \eqref{feq-ef1} exist and are unique till the
stopping time $\tau_K = \inf\{\tau\ge0: |\tilde a(\tau)|_{s_*}=K\}$, where
$K$ is any positive number.

 In the theorem below $v^\epsilon(\tau)$ denotes a solution of \eqref{feq-v1}
with the initial value $v_0\in h^{s_*}$.

  \begin{theorem}\label{f-theorem}
  If Assumption A\textprime{}  holds, there exists a unique strong 
  solution $\tilde a(\tau)$, $0\le\tau\le T$,  of equation
  \eqref{feq-ef1} such that $\tilde a(0)=v_0=\Psi(u_0)\in h^{s_*}$,  and 
$$
\cD\left(I(v^\epsilon(\tau))\right)\strela \cD\left(I(\tilde
a(\tau))\right)  \quad\mbox{as }\epsilon \to 0\ ,
$$
 in $C([0,T],h^{s_1}_I)$, for any $s_1<s_*$. 
\end{theorem}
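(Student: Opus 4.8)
The plan is to reproduce the scheme of Section~\ref{s-p-m}, replacing pathwise pre-compactness by tightness of laws and adding the identification of the limiting diffusion. Pass to the slow time $\tau=\epsilon t$ and to the interaction representation $a^\epsilon(\tau)=\Phi_{\epsilon^{-1}\Lambda\tau}v^\epsilon(\tau)$; then \eqref{feq-v1} becomes
\[
da^\epsilon_k=\big(-\mu\lambda_k a^\epsilon_k+Y_k(a^\epsilon,\epsilon^{-1}\tau)\big)\,d\tau+\sum_{l}e^{i\epsilon^{-1}\lambda_k\tau}\Psi_{kl}b_l\,d\bb_l\,,\qquad k\in\N\,,
\]
with $Y$ as in \eqref{def-Y}. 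Since $|a^\epsilon_k|=|v^\epsilon_k|$, and since $a\mapsto I(a)$ is locally Lipschitz from $h^{s_1}$ into $h^{s_1}_I$, it is enough to show that $\cD(a^\epsilon(\cdot))\strela\cD(\tilde a(\cdot))$ in $C([0,T],h^{s_1})$ as $\epsilon\to0$ and then invoke the continuous mapping theorem.

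\emph{A priori estimates and tightness.} Writing $a^\epsilon$ in mild form (the heat semigroup $e^{-\mu\hA\tau}$ commutes with all the rotations $\Phi_{\epsilon^{-1}\Lambda\tau}$, so no oscillatory factor reaches it), and combining Assumption~A\textprime{}, the bound \eqref{restr-P} and the Burkholder--Davis--Gundy inequality for the stochastic convolution (controlled by $B_{s_*}$), one obtains, uniformly in $\epsilon$,
\[
\E\sup_{0\le\tau\le T}|a^\epsilon(\tau)|_{s_*}^{p}\le C_{p}\,,\qquad \E\,|a^\epsilon(\tau_2)-a^\epsilon(\tau_1)|_{\bar s}^{p}\le C_{p}\,|\tau_2-\tau_1|^{\gamma p}
\]
for every $\bar s<s_*$ and a suitable $\gamma=\gamma(\bar s,s_*)>0$ (the drift increment exactly as in \eqref{new2}, the martingale increment via the factorization method). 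Together with the compact embedding $h^{s_*}\hookrightarrow h^{s_1}$ and a standard modulus-of-continuity tightness criterion, this makes $\{\cD(a^\epsilon)\}_\epsilon$ tight in $C([0,T],h^{s_1})$.

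\emph{Passage to the limit.} Given $\epsilon_j\to0$, extract a weakly convergent subsequence and, by the Skorokhod representation theorem, put it on a new probability space so that $a^{\epsilon_j}\to a^0$ a.s.\ in $C([0,T],h^{s_1})$; by Fatou, $\E\sup_\tau|a^0(\tau)|_{s_*}^p\le C_p$, so $a^0$ has $h^{s_*}$-bounded paths. The crucial step is the stochastic analogue of Lemma~\ref{rnls-lem-apr}: with $\mathcal Y=Y-R$ and $s'$ as there,
\[
\sup_{0\le\tilde\tau\le T}\Big|\int_0^{\tilde\tau}\mathcal Y(a^\epsilon(\tau),\epsilon^{-1}\tau)\,d\tau\Big|_{s'}\longrightarrow 0\quad\text{in probability, as }\epsilon\to0\,.
\]
Its proof copies the deterministic one almost word for word --- partition $[0,T]$ into intervals of length $L=\epsilon^{1/2}$, freeze $a^\epsilon$ at the left endpoints, apply Corollary~\ref{c1} and Lemma~\ref{lem-r-lip}(iii) on each interval (using $\epsilon^{-1}L=L^{-1}$) --- the only changes being that the increment bound is used in the $L^p(\Omega)$-form above, and that one first localises to the event $\{\sup_\tau|a^\epsilon|_{s_*}\le R\}$, whose complement has probability $<\delta$ uniformly in $\epsilon$ by Chebyshev. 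Consequently, passing to the limit in the integrated $a^{\epsilon_j}$-equation,
\[
a^0(\tau)=v_0+\int_0^\tau\big(-\mu\hA a^0(s)+R(a^0(s))\big)\,ds+\overline M(\tau)\,,
\]
where $\overline M$ is the a.s.\ limit of the martingale parts $M^{\epsilon_j}$, a continuous process with square-integrable components.

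\emph{Identification of the noise and uniqueness.} From the explicit formula, the pre-limit martingales satisfy
\[
\tfrac{d}{d\tau}\langle M^\epsilon_k,\overline{M^\epsilon_r}\rangle_\tau=c\,e^{i\epsilon^{-1}(\lambda_k-\lambda_r)\tau}\sum_l\Psi_{kl}\Psi_{rl}b_l^2\,,\qquad\tfrac{d}{d\tau}\langle M^\epsilon_k,M^\epsilon_r\rangle_\tau=0\,,
\]
with $c$ fixed by the normalisation of the $\bb_l$; as $\epsilon\to0$ the oscillatory factor annihilates all terms with $\lambda_k\ne\lambda_r$, leaving the resonant matrix $A$ of \eqref{diff-ef1}. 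Using the uniform moment bounds to justify the passage to the limit, $\overline M$ is a continuous martingale with $\langle\langle\overline M\rangle\rangle_\tau=c\,\tau A$, so by the martingale representation theorem (on a possibly enlarged space) $\overline M=\int_0^\cdot B\,d\bb'$ with $B$ the principal square root of $A$; hence $a^0$ is a weak solution of \eqref{feq-ef1} with $a^0(0)=v_0$. Since $R$ is locally Lipschitz (Lemma~\ref{lem-r-lip}) and $a^0$ has $h^{s_*}$-bounded paths, there is no explosion on $[0,T]$, pathwise uniqueness holds up to $T$, and Yamada--Watanabe yields a strong solution $\tilde a$ on $[0,T]$ with $\cD(\tilde a)=\cD(a^0)$, unique in law. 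Therefore the weak limit is independent of the subsequence, $\cD(a^\epsilon)\strela\cD(\tilde a)$ in $C([0,T],h^{s_1})$, and composing with $a\mapsto I(a)$ gives the assertion. The main obstacle is this last part combined with the averaging step: one must control the frozen-coefficient errors uniformly in $\epsilon$ in $L^p(\Omega)$ although $a^\epsilon$ is only Hölder --- not differentiable --- in $\tau$, and then extract from the oscillatory covariation precisely the $\lambda$-diagonal resonant part giving \eqref{diff-ef1}.
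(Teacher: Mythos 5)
Your proposal is correct and follows essentially the same route as the paper's proof: interaction representation, tightness of the laws of $a^\epsilon$ in $C([0,T],h^{s_1})$, a stochastic counterpart of Lemma~\ref{rnls-lem-apr} obtained by localising to $\{\sup_\tau|a^\epsilon|_{s_*}\le M\}$ and partitioning $[0,T]$ into intervals of length $L=\epsilon^{1/2}$ (Corollary~\ref{c1} and Lemma~\ref{lem-r-lip}(iii)), identification of the resonant diffusion matrix \eqref{diff-ef1} from the oscillatory covariations, and uniqueness for the effective equation to pass from subsequential to full convergence. The only difference is one of packaging: the paper runs Khasminskii's scheme as a martingale problem directly on the path space carrying the limit law $\cQ^0$, whereas you use Skorokhod representation, the martingale representation theorem and Yamada--Watanabe, which are interchangeable here.
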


In the theorem's assertion and below the arrow 
 $\strela$ stands for the weak convergence of measures. 
Let us assume further:

\noindent\textbf{Assumption B\textprime{}.} {\it 
 i)
Eq.~\eqref{y1} has a unique strong solution $u(\tau), u(0)=u_0\in H^{s_*}$,
 defined for $\tau\ge0$, and 
\begin{equation}\label{ass-a3}
\E\sup_{\theta \le \tau \le \theta+1} 
\|u(\tau)  \|^p_{s_*}\le C\quad \text{for any $\theta\ge0$}\,,
\end{equation}
where $C= C(\left\|u_0\right\|_{s_*},B_{s_*})$.

ii) Eq.~\eqref{feq-ef1} has a unique stationary measure $\mu^0$ and is mixing. }

\begin{remark}\label{r2}
The assumption i) is fulfilled, for example, for equations, discussed in Remark~\ref{r1}. 
Assumption~ii) holds trivially if
 for a.e. realisation of the random force any two solutions of 
Eq. \eqref{feq-ef1} 
 converge exponentially fast.\footnote{This is  fulfilled,  for example, if i)  holds and 
$\cP(u)=-u+\cP_0(u)$, where the Lipschitz constant of $\cP_0$ is less than one.
}
For less trivial examples, corresponding to
perturbations of linear systems with non-resonant or completely resonant spectra, see \cite{Kuk11, skam2013}. 
\end{remark}
  
  Assumption  B\textprime{} i) and the Bogolyubov-Krylov argument, applies for solutions, starting 
  from 0,  imply that Eq.~\eqref{y1} has a stationary 
  measure $\mu^\epsilon$, supported by the space $H^{s_*}$, and inheriting estimates 
  \eqref{ass-a3}. 
  
 \begin{theorem}\label{t.stat}
Let us suppose that Assumptions A\textprime{} and B\textprime{}  hold. Then 
 \be\label{conv1}
\lim_{\epsilon \to 0}  \mu^{\epsilon} = \mu^0\ ,
\ee
weakly in $h^{s_1}$, for any $s_1<s_*$. The measure $\mu^0$ is invariant with respect to
transformations $\Phi_{t\Lambda}$, $t\in\R$. 
If, in addition, \eqref{y1} is
mixing and $\mu^\epsilon$ is its unique stationary measure, then for  any
 solution $u^\epsilon(t)$ of \eqref{y1} with $\epsilon$-independent
 initial data $u_0\in H^{s_*}$, we have 
$$
\lim_{\epsilon\to 0}\lim_{t\to \infty}
\cD\left( v^\epsilon(t)
\right) =  \mu^0\,,
$$
where $v^\epsilon(t) =  \Psi\left(u^\epsilon(t)\right)$. 
\end{theorem}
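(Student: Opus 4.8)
The plan is to prove Theorem~\ref{t.stat} in three stages: first transfer the convergence of finite-time laws (Theorem~\ref{f-theorem}) to a statement about stationary measures; then upgrade the convergence of $\Psi\circ\mu^\epsilon$ along actions to convergence of the measures themselves on $h^{s_1}$; and finally combine stationarity with mixing to obtain the iterated-limit formula.

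\medskip
\textbf{Step 1: tightness and a limiting point.} First I would note that, by Assumption~B\textprime{}(i) and the Bogolyubov--Krylov construction recalled before the theorem, $\mu^\epsilon$ is supported on $H^{s_*}$ and inherits the uniform moment bound: $\int \|u\|_{s_*}^p\,\mu^\epsilon(du)\le C$ with $C$ independent of $\epsilon$. Since balls of $h^{s_*}$ are compact in $h^{s_1}$ for $s_1<s_*$, the family $\{\Psi\circ\mu^\epsilon\}$ is tight in $h^{s_1}$; pick any sequence $\epsilon_j\to0$ and a weakly convergent subsequence $\Psi\circ\mu^{\epsilon_j}\strela \nu$ in $h^{s_1}$, where $\nu$ is supported on $h^{s_*}$.

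\medskip
\textbf{Step 2: identifying the limit via the effective equation.} The heart of the argument is to show $\nu=\mu^0$. Take $u_0$ distributed as $\mu^{\epsilon_j}$ (independently of the noise); then $v^{\epsilon_j}(\tau)$ is a stationary process for \eqref{feq-v1}, so $\cD(v^{\epsilon_j}(\tau))=\Psi\circ\mu^{\epsilon_j}$ for every $\tau$. Here one cannot apply Theorem~\ref{f-theorem} verbatim because its hypotheses refer to deterministic initial data; the fix is to apply it conditionally on $u_0$ and then integrate over $u_0\sim\mu^{\epsilon_j}$, using the uniform-in-$u_0$ moment estimate \eqref{ass-a3} (which gives uniform integrability and lets one pass the weak convergence of conditional laws through the averaging). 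This yields, for each fixed $\tau\in[0,T]$,
$$
\cD\big(I(v^{\epsilon_j}(\tau))\big)\strela \cD\big(I(\ta^{\nu}(\tau))\big)\quad\text{as }\epsilon_j\to0\,,
$$
where $\ta^{\nu}$ solves the stochastic effective equation \eqref{feq-ef1} with $\cD(\ta^{\nu}(0))=\nu$. The left side equals $I_*(\Psi\circ\mu^{\epsilon_j})\strela I_*\nu$, so $I_*\nu=\cD(I(\ta^{\nu}(\tau)))$ for all $\tau\in[0,T]$. To promote this from actions to the full measure I would use the $\Phi_{t\Lambda}$-symmetry: equation \eqref{feq-v1} is equivariant under $\Phi_\theta$ up to a rotation of the (rotationally invariant, by the structure of \eqref{diff-ef1}) noise, which forces every stationary measure $\mu^\epsilon$, and hence $\nu$, to be invariant under all $\Phi_{t\Lambda}$; a $\Phi_{t\Lambda}$-invariant measure on $h^{s_1}$ is determined by its pushforward under $I$ together with this invariance, exactly as in \cite{Kuk11}. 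Since $R$ commutes with $\Phi_{\Lambda t}$ (Lemma~\ref{lem-r-lip}(iii)) and $A$ in \eqref{diff-ef1} is $\Phi_{\Lambda t}$-invariant, the effective equation is likewise equivariant, so $\cD(\ta^\nu(\tau))$ stays $\Phi_{\Lambda\tau}$-"twisted"-invariant; matching action-laws for all $\tau\in[0,T]$ then pins down $\nu$ as an invariant measure of \eqref{feq-ef1}. By Assumption~B\textprime{}(ii) the only such measure is $\mu^0$, so $\nu=\mu^0$; since the limit does not depend on the subsequence, $\Psi\circ\mu^\epsilon\strela\mu^0$ in $h^{s_1}$, which is \eqref{conv1}, and $\mu^0$ is $\Phi_{t\Lambda}$-invariant.

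\medskip
\textbf{Step 3: the iterated limit.} If in addition \eqref{y1} is mixing with unique stationary measure $\mu^\epsilon$, then for any fixed $u_0\in H^{s_*}$ one has $\cD(v^\epsilon(t))=\Psi\circ\cD(u^\epsilon(t))\strela \Psi\circ\mu^\epsilon$ as $t\to\infty$ (for each fixed $\epsilon$), so $\lim_{t\to\infty}\cD(v^\epsilon(t))=\Psi\circ\mu^\epsilon$; then $\lim_{\epsilon\to0}$ of this is $\mu^0$ by \eqref{conv1}, giving the stated formula.

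\medskip
\textbf{Main obstacle.} The delicate point is Step~2: Theorem~\ref{f-theorem} is stated for deterministic initial data and on the fixed window $[0,T]$, whereas here the initial datum is random (distributed as $\mu^{\epsilon}$) and one needs the convergence uniformly enough in $u_0$ to integrate it out. This is exactly where the uniform-in-$\|u_0\|_{s_*}$ bounds in Assumption~A\textprime{} and \eqref{ass-a3}, together with the polynomial growth bound \eqref{restr-P}, are used: they supply the uniform integrability needed to commute the expectation over $u_0\sim\mu^\epsilon$ with the $\epsilon\to0$ limit, and they guarantee that the bad event where the solution leaves $B(M_1,h^{s_*})$ has probability $o(1)$ uniformly. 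The second, more structural subtlety is the passage from convergence of action-laws to convergence of measures, which genuinely needs the $\Phi_{t\Lambda}$-invariance and hence the special block structure \eqref{diff-ef1} of the noise.
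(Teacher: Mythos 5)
Your Steps 1 and 3 are fine (tightness from the uniform moment bound, and the iterated limit is immediate once \eqref{conv1} is known), but Step 2 — the identification of the limit point $\nu$ — contains the real content and it does not go through as written. First, the claim that each stationary measure $\mu^\epsilon$ is exactly $\Phi_{t\Lambda}$-invariant is false: equation \eqref{feq-v1} is \emph{not} equivariant under $\Phi_{t\Lambda}$, because the drift $P$ (the transform of $\mu Vu+\cP(\nabla u,u)$) does not commute with $\Phi_{t\Lambda}$ — only its resonant average $R$ does (Lemma~\ref{lem-r-lip}(iii)); the rotation invariance of the noise does not help with the nonlinearity. Invariance holds only for the limit measure, and in the paper it is itself a consequence of the averaging, obtained through identities valid up to $\vk(\epsilon^{-1})$ errors. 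Second, and decisively: even granting that the limit $\nu$ is $\Phi_{t\Lambda}$-invariant, your assertion that such a measure "is determined by its pushforward under $I$ together with this invariance, exactly as in \cite{Kuk11}" is only true when the frequency vector $\Lambda$ is non-resonant, so that the closure of $\{t\Lambda\}$ is the whole torus. This paper's point is precisely that $\Lambda$ is arbitrary; in the resonant case the distributions of the resonant phase combinations $k\cdot\vp$ with $k\cdot\Lambda=0$ are invariants of the group $\{\Phi_{t\Lambda}\}$ that the actions do not see, so matching $\cD(I(\cdot))$ for all $\tau\in[0,T]$ neither pins down $\nu$ nor shows that $\nu$ is invariant for \eqref{feq-ef1}. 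Your route through Theorem~\ref{f-theorem} therefore loses exactly the phase information that the resonant effective equation retains. (There is also the subsidiary issue you flag yourself: Theorem~\ref{f-theorem} is stated for deterministic data, and integrating it against $\mu^{\epsilon_j}$ would need a uniformity in $u_0$ that is not established there; this is repairable, but the actions-to-measure step is not.)

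The paper avoids both problems by never passing through the action laws. It takes a stationary solution $v^\epsilon$ with $\cD(v^\epsilon(\tau))\equiv\mu^\epsilon$, notes that the Khasminskii/martingale argument of Proposition~\ref{prop1} (and Lemma~\ref{lem-rot}) uses only the moment bound \eqref{ass-a3}, hence applies verbatim to random initial data, and so gets subsequential convergence of the \emph{full process laws} $\cD(a^{\epsilon_l}(\cdot))\strela\cD(a^0(\cdot))$ in $X^{s_1}$, with $a^0$ a weak solution of \eqref{feq-ef1}. Stationarity of $a^0$ is then extracted from Lemma~\ref{l62}: for Lipschitz $f$ one has $\E f(v^\epsilon(\tau))=\E\fla(a^\epsilon(\tau))+\vk(\epsilon^{-1})$ and, testing with $f\circ\Phi_{\epsilon^{-1}\Lambda\tau}$, also $\E f(a^\epsilon(\tau))=\E\fla(a^\epsilon(\tau))+\vk(\epsilon^{-1})$ (here the $\Phi_{\Lambda t}$-invariance of $\fla$, Lemma~\ref{lnew}(b), is what replaces your symmetry argument); hence $\cD(a^{\epsilon_l}(\tau))\strela\bar\mu^0$ for every $\tau$, so $\bar\mu^0$ is a stationary measure of the effective equation and equals $\mu^0$ by Assumption~B\textprime{}(ii). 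The $\Phi_{t\Lambda}$-invariance of $\mu^0$ comes from the same approximate identities, not from an exact symmetry of \eqref{feq-v1}. To repair your proof you would have to either restrict to non-resonant $\Lambda$ (defeating the purpose of the paper) or replace Step~2 by an argument at the level of process laws and averaged observables of the type $\fla$, which is essentially the paper's proof.
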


For examples of mixing equations \eqref{y1}  see \cite{Kuk11} and
references in that work.  
In particular,   \eqref{y1} is mixing if $\cP(u,\nabla u)=\cP(u)$ is a 
 smooth function such that all its derivatives are bounded uniformly in $u$, 
 cf. Remark~\ref{r2}. 

For the case when  the spectrum
$\Lambda$ is  non-resonant (see Example~\ref{ex1}) or is completely resonant, i.e. \eqref{complres} holds, 
the theorem was proved in  \cite{Kuk11, skam2013}.

The proofs of  Theorem \ref{f-theorem} and \ref{t.stat}
 closely follow the arguments in~\cite{Kuk11,skam2013,bplane}. Proof of Theorem~\ref{t.stat}, in
 addition, uses some technical ideas from \cite{Dym} (see there Corollary~4.2). 
 The proofs 
  are given, respectively, in Section~\ref{sec-r1} and Section~\ref{sec-r2}.

\section{Proof of Theorem~\ref{f-theorem}} \label{sec-r1} 
As in the proof of Theorem~\ref{m-theorem}, let us assume, without
loss of generality, that $T=1$, 
 $s_1>d/2+1$ and $s_1> s_*-2$ (recall that
$s_1<s_*$ and $s_*\in (d/2+1,n]$). 

Following the suite  of \cite{skam2013} (see also
\cite{bplane}) we pass once again to the $a$-variables, defined in
\eqref{def-a}). In view of \eqref{feq-v1}, they
satisfy the system (cf. \eqref{eq-a1})
\begin{equation}\label{feq-a1}
  da_k=\left(-
  \mu\lambda_ka_k+Y_k(a,\epsilon^{-1} \tau) \right)d\tau
  + e^{i\epsilon^{-1}\lambda_k\tau} \sum_l \Psi_{kl}b_ld\bb_l
  ,\quad k\in\mathbb{N}\ ,
\end{equation}
where $Y$  is defined in \eqref{def-Y}. For any $p$ we denote
$$
X^p=C([0,1],h^p)\,, \qquad X^p_I=C([0,1],h^p_I)\,. 
$$

Let $a^\epsilon$ be a solution of \eqref{feq-a1} such that
$a^\epsilon(0)=v_0=\Psi(u_0)\in h^{s_*}$; we will often write $a$ for $a^\epsilon$
to shorten notation. Denote the white noise in \eqref{feq-a1} as $\dot
\zeta(t,x)$ and denote $U_1(\tau)= Y(a(\tau),\epsilon^{-1} \tau)$,
$U_2(\tau)=-\hA a(\tau)$. 
Then 
$$
\dot a-\dot \zeta= U_1+U_2\,.
$$
 In view of \eqref{restr-P},
$\left\|U_1\right\|_{s_*-1}=|P(v)|_{s_*-1}\le 
C(1+ \|u(\tau)\|_{s_*}^{\bar N})
$. So, by
\eqref{ass-a2},   
$$
\E\int_\tau^{(\tau+\tau')\wedge     1}\left\|U_1\right\|_{s_*-1}\,dt\le
C\int_\tau^{(\tau+\tau')\wedge     1}\E C(1+ \|u(t)\|_{s_*}^{\bar N})
 \,dt \le
C(\left\|u_0\right\|_{s_*},B_{s_*}) \tau'\ , 
$$
for any $\tau\in[0,1]$ and $\tau'>0$. 
Similar,
$$
\E \int_\tau^{(\tau+\tau')\wedge     1 } \left\|U_2\right\|_{s_*-2}\,dt
\le 
\mu C \E\int_\tau^{(\tau+\tau')\wedge     1} 
\left\|u\right\|_{s_*}\le \mu C(\left\|u_0\right\|_{s_*},B_{s_*}) \tau'\ . 
$$
Hence,  there exists $\gamma>0$ such that
$$
\E\left\|(a-\zeta)((\tau+\tau')\wedge
1)-(a-\zeta)(\tau)\right\|_{s_1}\le C(\left\|u_0\right\|_{s_*},B_{s_*})
{\tau'}^{\gamma}\ ,
$$
in virtue of the interpolation  and H\"older
inequalities (cf. \eqref{new2}).
It is classical that 
$$
\PP\{ \|\zeta\|_{C^{1/3}([0,1],h^{s_1})}\le R_3\}\to1\quad\text{as}\quad R_3\to\infty\,.
$$
In view of what was said, for any $\delta>0$ there is a  set
$Q^1_\delta\subset X^{s_1}$,
formed by equicontinuous functions,  
such that 
$$
\PP\{a^\epsilon\in Q^1_\delta\} \ge 1-\delta\,,
$$
for each $\epsilon$.
By \eqref{ass-a2},
$$
\PP\{\|a^\epsilon\|_{X^{s_*}} \ge C\delta^{-1}\} \le \delta\,,
$$
 for a suitable $C$, uniformly in $\epsilon$.
Consider the set 
$$
Q_\delta =\left\{ a^\epsilon\in Q^1_\delta: \left\|a\right\|_{X^{s_*}} \le
C\delta^{-1}\right\}\ .
$$
Then $\PP\{a^\epsilon\in Q_\delta\} \ge 1-2\delta$, for each $\epsilon$. 
By this relation and the Arzel\`a-Ascoli theorem (e.g., see \cite{KelNam}, \S8),
the set of laws 
$\{\cD(a^\epsilon(\cdot)),\ 0<\epsilon \le1\}$, 
is tight in $X^{s_1}$. So  by the Prokhorov theorem 
 there is a sequence $\epsilon_l\to 0$ 
and a Borel measure $\cQ^0$ on $X^{s_1}$ such that
\be\label{z5}
\cD(\ai^{\epsilon_l}(\cdot)) \strela \cQ^0\quad\text{as}\quad
\epsilon_l\to 0\,.
\ee
 Accordingly, due to \eqref{newesti}, for 
actions of solutions $v^\epsilon$ we have the convergence 
\be\label{z55}
\cD\left( I\left( v^{\epsilon_l}(\cdot)\right)\right) \strela I\circ
\cQ^0\quad\text{as}\quad \epsilon_l\to 0\,,
\ee 
 in  $X_I^{s_1}
$.

Theorem~\ref{f-theorem} follows then as a simple corollary from
\begin{proposition}\label{prop1}
There exists a unique weak solution $ a(\tau)$ of the effective
equation \eqref{feq-ef1} such that $\cD(a)=\cQ^0$, $a(0)=v^0$ 
 a.s.; and the
convergences \eqref{z5} and \eqref{z55} hold as $\epsilon \to 0$.
\end{proposition}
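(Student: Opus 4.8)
The plan is to identify $\cQ^0$ as the law of the unique weak solution of \eqref{feq-ef1} by the martingale-problem method, following \cite{Kuk11,skam2013,bplane}. First I would apply the Skorokhod representation theorem to the weak convergence \eqref{z5}: on a new probability space there are processes $a^{\epsilon_l}(\cdot)$ distributed as the original $a^{\epsilon_l}(\cdot)$ and a limit $a^0(\cdot)$ with $\cD(a^0)=\cQ^0$ such that $a^{\epsilon_l}\to a^0$ a.s.\ in $X^{s_1}$. From the uniform moment bound \eqref{ass-a2} and Fatou's lemma, $\E\sup_{0\le\tau\le1}|a^0(\tau)|^p_{s_*}<\infty$ for every $p$, so $a^0(\tau)\in h^{s_*}$ a.s.; and since $a^{\epsilon}(0)=v_0=\Psi(u_0)$ deterministically, $a^0(0)=v_0$ a.s.

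Next I would pass to the limit in the drift. Writing \eqref{feq-a1} in integrated form, $N^{\epsilon}_k(\tau):=a^{\epsilon}_k(\tau)-a^{\epsilon}_k(0)+\int_0^\tau\mu\lambda_k a^{\epsilon}_k\,ds-\int_0^\tau Y_k(a^{\epsilon},\epsilon^{-1}s)\,ds$ is a continuous martingale. The deterministic averaging Lemma~\ref{rnls-lem-apr} remains applicable: its proof uses only the bound $|a^{\epsilon}|_{X^{s_*}}\le M_1$ and the equicontinuity estimate \eqref{new2}, and on the set $Q_\delta$ both hold with an $\epsilon$-independent constant and modulus of continuity (the modulus of $Q^1_\delta$ replacing the H\"older rate in \eqref{new2}); since $\PP\{a^{\epsilon}\notin Q_\delta\}\le2\delta$, this yields $\sup_{\tilde\tau\in[0,1]}\big|\int_0^{\tilde\tau}(Y(a^{\epsilon},\epsilon^{-1}s)-R(a^{\epsilon}))\,ds\big|_{s'}\to0$ in probability, hence a.s.\ along a further subsequence. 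Together with $a^{\epsilon_l}\to a^0$ in $X^{s_1}$ and the local Lipschitz property of $R$ (Lemma~\ref{lem-r-lip}(ii)), we get $\int_0^\tau Y_k(a^{\epsilon_l},\epsilon_l^{-1}s)\,ds\to\int_0^\tau R_k(a^0)\,ds$ and $\int_0^\tau\lambda_k a^{\epsilon_l}_k\,ds\to\int_0^\tau\lambda_k a^0_k\,ds$, so $N^{\epsilon_l}_k(\tau)\to M_k(\tau):=a^0_k(\tau)-a^0_k(0)+\int_0^\tau\mu\lambda_k a^0_k\,ds-\int_0^\tau R_k(a^0)\,ds$ a.s.

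The core of the argument is to show that $M_k$ is a martingale with the covariance prescribed by \eqref{feq-ef1}--\eqref{diff-ef1}. The quadratic (co)variations of $N^{\epsilon}_k$ are explicit oscillatory integrals involving $\int_0^\tau e^{i\epsilon^{-1}(\lambda_k-\lambda_r)s}\,ds\cdot\sum_l b_l^2\Psi_{kl}\overline{\Psi_{rl}}$, which converge to $A_{kr}\tau$: the off-diagonal terms $\lambda_k\ne\lambda_r$ are annihilated by the fast oscillation, leaving exactly the resonant matrix \eqref{diff-ef1}. Testing the martingale identities $\E\big[(N^{\epsilon}_k(\tau)-N^{\epsilon}_k(\sigma))\Phi\big]=0$ and $\E\big[(N^{\epsilon}_k(\tau)\overline{N^{\epsilon}_r(\tau)}-N^{\epsilon}_k(\sigma)\overline{N^{\epsilon}_r(\sigma)}-\int_\sigma^\tau A^{\epsilon}_{kr}\,ds)\Phi\big]=0$ against bounded continuous functionals $\Phi$ of the path up to time $\sigma$, and passing to the limit $\epsilon_l\to0$ (uniform integrability supplied by \eqref{ass-a2} and \eqref{restr-P}), one obtains that $M_k$ and $M_k\overline{M_r}-A_{kr}\tau$ are martingales with respect to the natural filtration of $a^0$. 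By the martingale representation theorem, with $A=B B^{*}$ and $B$ as in \eqref{feq-ef1}, $a^0$ is, after enlarging the probability space if necessary, a weak solution of the effective equation \eqref{feq-ef1} with $a^0(0)=v_0$.

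It remains to handle uniqueness. Since $R$ is locally Lipschitz (Lemma~\ref{lem-r-lip}), \eqref{feq-ef1} is a semilinear stochastic heat equation with additive noise for which pathwise uniqueness holds up to the blow-up time $\tau_K$; the a priori bound $a^0(\tau)\in h^{s_*}$ obtained above excludes blow-up on $[0,1]$, so by Yamada--Watanabe weak uniqueness holds on $[0,1]$. Hence $\cQ^0=\cD(a^0)$ does not depend on the subsequence $\epsilon_l$, which upgrades \eqref{z5} (and, via \eqref{newesti}, \eqref{z55}) to convergence along the full family $\epsilon\to0$, proving the proposition. I expect the main obstacle to be the noise-identification step: one must establish simultaneously that the oscillating stochastic integrals average to the resonant covariance $A_{kr}$ and that all the limits pass through the expectations in the martingale characterization, which is precisely where the moment estimate \eqref{ass-a2} and the polynomial bound \eqref{restr-P} are indispensable; by comparison the drift averaging is soft, since it reduces to the deterministic Lemma~\ref{rnls-lem-apr} restricted to the good sets $Q_\delta$.
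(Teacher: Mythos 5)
Your proposal is correct and follows essentially the same route as the paper: the Khasminski martingale-problem scheme, with the drift handled by a stochastic counterpart of Lemma~\ref{rnls-lem-apr} obtained by restricting to high-probability sets of bounded, equicontinuous paths (this is exactly the content and the proof mechanism of the paper's Lemma~\ref{lem-rot}), the noise identified through the oscillatory diffusion matrix $\cA_{kr}$ averaging to the resonant matrix \eqref{diff-ef1}, and uniqueness for the effective equation \eqref{feq-ef1} used to upgrade subsequential convergence to convergence as $\epsilon\to0$. The only differences are routine packaging (Skorokhod representation and an explicit Yamada--Watanabe step, where the paper instead works directly with the $\widetilde N+\overline N$ decomposition and cites uniqueness of strong solutions), so no genuine gap.
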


\begin{proof}
The  proof follows the Khasminski scheme (see \cite{Khas68, FW03}), as expounded
in \cite{skam2013}. Namely, we show that the limiting measure $\cQ^0$ is a martingale 
solution of the limiting equation, which turns out to be exactly the equation \eqref{feq-ef1}. 
Since the equation has a unique solution, then the convergences \eqref{z5}, \eqref{z55}
hold as $\epsilon\to0$. 

For $\tau\in[0,1]$ consider the processes 
$$
N^{\epsilon_l}_k=\ai^{\epsilon_l}_k(\tau) - \int_0^\tau \left(-\mu
\lambda_k \ai^{\epsilon_l}_k(s)+R_k(\ai^{\epsilon_l}(s))\right)\,d s\ ,
\quad k \ge 1\ 
$$
(cf. Eq. \eqref{feq-ef1}). 
Due to \eqref{feq-a1}  we  write $N^{\epsilon_l}_k$ as
$$
N^{\epsilon_l}_k(\tau)= \widetilde N^{\epsilon_l}_k(\tau)+\overline
N^{\epsilon_l}_k(\tau)\ ,
$$
where $\widetilde N^{\epsilon_l}_k(\tau)=a^{\epsilon_l}(\tau)-
\int_0^\tau (-\mu\lambda_k
\ai^{\epsilon_l}(s)+Y_k(\ai^{\epsilon_l}(s),\epsilon_l^{-1}s)) 
ds $ is a $\cQ^0$ martingale and the disparity $\overline N^{\epsilon_l}_k$ is 
$$ 
\overline
N^{\epsilon_l}_k(\tau)=\int_0^{\tau }
\cY_k(\ai^{\epsilon_l}(s),\epsilon_l^{-1}s)
 ds \,
$$
(as before, $\cY(a,t) = Y(a,t)- R(a)$). 

The key point is then a stochastic counterpart of
Lemma~\ref{rnls-lem-apr}, which is proved below:
\begin{lemma}\label{lem-rot}
For every $k\in \N$,  $\E\,\mathfrak{A}^\epsilon_k\to0$ as $\epsilon\to0$, where 
$$
\mathfrak{A}^\epsilon_k = \max_{0\le \tilde \tau \le
  1}\left|\int_{0}^{\tilde{\tau}}\mathcal{Y}_k(a^\epsilon(\tau),
\epsilon^{-1}\tau)d\tau\right|  \,.
$$
\end{lemma}

This lemma and the convergence \eqref{z5} 
imply that the processes
$$
N_k(\tau)= \ai_k(\tau)-\int_0^\tau\left(-\mu\lambda_k\ai_k+ R_k(\ai)\right)\,ds\ , \quad k\ge 1\ ,
$$
are $\cQ^0$ martingales, considered on the probability space $(\Omega=X^{s_1}, \cF, Q^0)$ 
($\cF$ is the Borel sigma-algebra), given the natural filtration $(\cF_\tau, 0\le\tau\le1)$. For 
 details see \cite{kuksinjpam}, Proposition 6.3). 

Consider then the diffusion matrix $\{\cA_{kr},k,r\ge 1\}$ for the system
\eqref{feq-a1}, i.e., 
$$
\cA_{kr}= \exp(i\epsilon^{-1}\tau(\lambda_k-\lambda_r))\sum_{l=1}^\infty
b_l^2\Psi_{kl}\bar \Psi_{rl} \ .
$$
Clearly, $\int_0^{\tilde \tau}\cA_{kr} d\tau\to A_{kr}\tilde \tau$, as $\epsilon\to
0$, where $A$ denotes the diffusion matrix for the system
\eqref{feq-ef1} (cf. \eqref{diff-ef1}). Similar to
Lemma~\ref{lem-rot}, we also find  that 
$$
\E \max_{0\le \tilde \tau\le  1} \left|\int_0^{\tilde \tau }
\cY_k(\ai^\epsilon(\tau),\epsilon^{-1}\tau) d\tau  \right|^2 \to 0\quad \mbox{as }
\epsilon\to 0\ . 
$$
Then, using the same argument as before, we see that the processes
\begin{equation*}
\begin{split}
N_{k} (\tau) N_{r}(\tau)- A_{kr}\tau =& \left(\widetilde
N_k\widetilde N_r -\int_0^\tau \cA_{kr} ds\right)\\
& +\left(\overline
N_k\overline N_r+\overline N_k \widetilde N_r+\widetilde N_k \overline
N_l - \int_0^\tau (\cA_{kr}-A_{kr})\,ds\right)
\end{split}
\end{equation*}
 are $\cQ^0$
martingales. That is, $\cQ^0$ is a solution of the martingale
problem with the  drift $R$ and the diffusion $A$ (see \cite{SV}), so 
 the assertion follows. 
\end{proof}

\begin{proof}[Proof of Lemma~\ref{lem-rot}]
We adopt a convenient notation from our previous publications. Namely, we denote by $\vk(r)$
various functions of $r$ such that $\vk\to0$  
as $r \to\infty$. 
 We write $\vk(r;M)$ to indicate that $\vk(r)$ 
depends on a parameter $M$. Besides for events $Q$ and $O$ and  a
random variable  $f$ we write $\PP_O(Q)=\PP(O\cap Q)$ and 
$\E_O(f)=\E(\chi_O\, f)$. 

The constants below may depend on $k$, but
this dependence is not indicated since $k$ is fixed through the proof. By $M\ge1$ we denote a 
constant which will be specified later.
Denote by $\Omega_M=\Omega^\epsilon_M$ the event
$$
\Omega_M=\left\{\sup_{0\le\tau\le 1} \left|\ai^\epsilon(\tau)\right|_{s_*}\le M\right\}\ .
$$
Then, by \eqref{ass-a2},
\be\label{06}
\PP(\Omega_M^c)\le \vk(M).
\ee

In view of  Lemma \ref{lem-r-lip}  (ii) and \eqref{restr-P}, for any $t\in[0, \epsilon^{-1}]$ and any
$a\in h^{s_*}$ the difference  $\cY=Y-R$  satisfies
\begin{equation}\label{kva}
\left|\cY_k(a,t)\right| \le \left|Y_k(a,t)\right|+
\left|R_k(a)\right|\le
\left|P_k(v)\right|+ \left|R_k(a)\right|\le
C (1+  \left|a\right|_{s_*})^{\bar N} \,.
\end{equation}
Using this and 
 \eqref{06}  we get  
\begin{equation}\label{6.88}
\begin{split}
\E_{\Omega_M^c}
\mathfrak A^\epsilon_{k}&\le \int_0^1
\E_{\Omega_M^c}|\cY_k(a(\tau),\epsilon^{-1} \tau) | d\tau\\
&\le C
\left(\IP(\Omega_M^c)\right)^{1/2}  
\int_0^1\left(\E (1+  \left|a\right|_{s_*})^{2\bar N}
\right)^{1/2} d\tau \le
\vk(M)\ . 
\end{split}
\end{equation}

To estimate $\E_{\Omega_M} \mathfrak A^\epsilon_{k}$, as in Lemma \ref{rnls-lem-apr} 
 we 
consider a partition of $[0,1]$  by the points 
$$
 b_n= nL,\quad 0\le n\le N-1, \; \ b_{N-1}\ge 1-L, \ b_N=1\,,
  \qquad L=\epsilon^{1/2}\,,
$$
$N\sim 1/ L$. 
 Let us denote
$$
\eta_l= \int_{ b_l}^{  b_{l+1}}  \cY_k(a
(\tau),\epsilon^{-1}\tau)  d\tau\ ,\quad 
0\le l\le N-1\,.
$$
Since for $\omega\in\Omega_M$  and any  $\tau'<\tau''$ such that
$\tau''-\tau'\le L$,  in view of \eqref{kva} we have $
\left|\int_{\tau'}^{\tau''} \cY_k(a(\tau),\epsilon^{-1} \tau)
d\tau\right| \le L C(M)$, then
\begin{equation}\label{5.170}
\E_{\Omega_M}
\mathfrak A^\epsilon_{k} \le LC(M)+\E_{\Omega_M}\sum_{l=0}^{N-1}|\eta_l|\,.
\end{equation}

Let us fix  any $\bar s>d/2+1$, $s_*-2<\bar s<s_*$,  sufficiently
small $\gamma>0$, and consider the event  
$$
\cF_l=\left\{\sup_{ b_l\le \tau\le b_{l+1}}\left|a^\epsilon(\tau)
-a^\epsilon( b_l)\right|_{\bar s} \ge L^\gamma\right\}\ .
$$
By the equicontinuity of the processes $\{a^\epsilon(\tau)\}$ on suitable events 
with arbitrarily close to one  $\epsilon$-independent probability 
 (as shown above), the probability of  $\IP(\cF_l) $  goes to zero
with $L$, uniformly in $l$ and $\epsilon$. Since $|\eta_l|\le C(M) L$ for $\omega\in\Omega_M$ and 
 each $l$, then 
\begin{equation}\label{5.211}
\sum_{l=0}^{N-1}\left|\E_{\Omega_M} |\eta_l|-
\E_{\Omega_M\backslash \cF_l}|\eta_l|  \right| \le
{C(M)}{L}\sum_{l=0}^{N-1}\IP_{\Omega_M}(\cF_l)\le C(M) \vk(L^{-1})\ , 
\end{equation}
and it remains to estimate $\sum_l \E_{\Omega_M\backslash \cF_l} |\eta_l|$. 

We have
\begin{equation*}
 \begin{split}
\  |\eta_l|   &\le \left|
\int_{ b_l}^{ b_{l+1}} \left(  \cY_k(a(\tau),\epsilon^{-1} \tau)-
\cY_k( a(  b_l),\epsilon^{-1} \tau)\right) d\tau\right|\\
&+ \left|
\int_{  b_l}^{ b_{l+1}}\left(  \cY_k(a( b_l),\epsilon^{-1} \tau)\right)
d\tau \right| 
 =:\Upsilon^1_l+\Upsilon^2_l\ .
\end{split}
\end{equation*}
By \eqref{lip-p-v} and Lemma~\ref{lem-r-lip} (ii), in
 $\Omega_M$ the following inequality hold:
\begin{equation*}
\left|  \cY_k(a(\tau),\epsilon^{-1} \tau)- \cY_k(a(
 b_l),\epsilon^{-1} \tau) \right|\le C(M) \left|a(\tau)- a(
 b_l)\right|_{\bar s}\ .
\end{equation*}
So that, by the definition of $\cF_l$, 
\begin{equation}\label{5.002}
  \sum_l  
\E_{\Omega_M\backslash \cF_l}  
 \Upsilon^1_l  \le L^\gamma C(M)=\vk( \epsilon^{-1};M) \ . 
\end{equation}

It remains to estimate the expectation of $\sum\Upsilon^2_l$.
 In view of \eqref{foralberto} (with $M_1=M$) 
 we have 
\begin{equation}\label{5.003}
 \sum_l  \E_{ \Omega_M\backslash\cF_l}
\Upsilon^2_l  \le  NL\vk_1(\epsilon^{-1};M ) =
\vk(\epsilon^{-1};M ). 
\end{equation}

Now the inequalities \eqref{6.88}--\eqref{5.003} jointly imply that 
\begin{equation*}
\begin{split}
\E\,
\mathfrak A^\epsilon_k \le \,& \vk(M)+ \vk(\epsilon^{-1};M)\ .  
\end{split}
\end{equation*}
Choosing first $M$ large  and then $\epsilon$ small
  we make the r.h.s.   arbitrarily
small. This proves the lemma. 
\end{proof}

Lemma \ref{lem-rot} estimates integrals of the differences
$$
e^{i\epsilon^{-1} \tau \lambda_k} P_k\big(\Phi_{-\epsilon^{-1} \tau \lambda_k} (a^\epsilon(\tau)\big) -\lan P\ran_{\Lambda,k}
(a^\epsilon(\tau) )\,.
$$
Similar result holds if we replace the averaging $\lan \cdot\ran_{\Lambda,k}$ by  $\lan\lan \cdot\ran\ran_{\Lambda}$
and the function $P_k$ by any Lipschitz function: 

\begin{lemma}\label{l62}
Let $f\in Lip_1(h^{s_1})=: Lip_1$ (i.e., $f$ is a bounded Lipschitz function on $h^{s_1}$). Then 

i) 
$\;\;
\E \int_0^1 \big( f(\Phi_{-\tau\epsilon^{-1} \Lambda}a^\epsilon(\tau)) - 
\fla (a^\epsilon(\tau)) \big)\, d\tau \to0\ $ as $\ \epsilon\to0\,;
$

ii) if  in i) $f$ is replaced by 
 $ f^\theta=f\circ\Phi_\theta$, $\theta\in\T^\infty$, then the rate of convergence  does not depend on $\theta$.
\end{lemma}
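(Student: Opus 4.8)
The plan is to adapt the proof of Lemma~\ref{lem-rot}, replacing the role of the action-shifted averaging $\lan\cdot\ran_{\Lambda,k}$ by the full rotation averaging $\lan\lan\cdot\ran\ran_\Lambda$, and replacing the nonlinearity $P_k$ by an arbitrary bounded Lipschitz function $f$. First I would observe that part~i) is exactly of the form treated in Lemma~\ref{lem-rot}, except that the deterministic ingredient is now Lemma~\ref{lnew} (the analog of Corollary~\ref{c1} for $\lan\lan\cdot\ran\ran_\Lambda$) rather than Corollary~\ref{c1} itself. Concretely, I would set
$$
g(a,t) = f\bigl(\Phi_{-t\Lambda}a\bigr) - \fla(a)\,,
$$
so that $g$ is bounded by $2\|f\|_{Lip_1}$ uniformly in $(a,t)$, and by the isometry property of $\Phi_\theta$ on $h^{s_1}$ the map $a\mapsto f(\Phi_{-t\Lambda}a)$ is Lipschitz with constant independent of $t$; hence $g(\cdot,t)\in Lip_{\mC}(h^{s_1})$ with a $t$-independent $\mC$. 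The integrand $f(\Phi_{-\tau\epsilon^{-1}\Lambda}a^\epsilon(\tau))-\fla(a^\epsilon(\tau))$ is $g(a^\epsilon(\tau),\epsilon^{-1}\tau)$.

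The bulk of the argument then copies the proof of Lemma~\ref{lem-rot} verbatim, with two simplifications coming from boundedness of $f$. Since $|g|\le 2\|f\|_{Lip_1}$ everywhere, there is no need to introduce the high-probability event $\Omega_M$ and cut off large norms: the contribution of any event of small probability is automatically $O(\|f\|_{Lip_1})$ times that probability, and more importantly the partition estimate goes through globally. So I would partition $[0,1]$ by $b_n = nL$ with $L=\epsilon^{1/2}$, $N\sim 1/L$, set $\eta_l=\int_{b_l}^{b_{l+1}} g(a^\epsilon(\tau),\epsilon^{-1}\tau)\,d\tau$, and split $|\eta_l|\le\Upsilon^1_l+\Upsilon^2_l$ exactly as there: $\Upsilon^1_l$ collects the variation of $g$ in its first argument over $[b_l,b_{l+1}]$, controlled on the equicontinuity events $\cF_l^c$ by $L^\gamma$ times a constant (using the uniform Lipschitz bound on $g(\cdot,t)$ and the bound \eqref{new2}-type equicontinuity of $a^\epsilon$ established in Section~\ref{sec-r1}); and $\Upsilon^2_l=\bigl|\int_{b_l}^{b_{l+1}} g(a^\epsilon(b_l),\epsilon^{-1}\tau)\,d\tau\bigr|$, which after the substitution $\tau = b_l+s$ and the factorization
$$
\epsilon\int_0^{\epsilon^{-1}L} f\bigl(\Phi_{-\epsilon^{-1}b_l\Lambda-s\Lambda}a^\epsilon(b_l)\bigr)\,ds
= L\cdot\frac{1}{L^{-1}}\int_0^{L^{-1}}\Phi_{-\Lambda s}\text{-average of }f\text{ at }\Phi_{-\epsilon^{-1}b_l\Lambda}a^\epsilon(b_l)
$$
equals, by the $\lan\lan\cdot\ran\ran_\Lambda$-analog of Corollary~\ref{c1} from Lemma~\ref{lnew}~a) together with the $\Phi_{\Lambda t}$-commutation in Lemma~\ref{lnew}~b), $L\,\fla(a^\epsilon(b_l)) + \vk_1(L^{-1})$ with $|\vk_1(L^{-1})|\le\overline\vk(L^{-1})$; but $\fla$ is shift-invariant by Lemma~\ref{lnew}~b), so the $\fla$-term cancels against $\int_{b_l}^{b_{l+1}}\fla(a^\epsilon(b_l))\,d\tau$ up to the third telescoping term $\int_{b_l}^{b_{l+1}}(\fla(a^\epsilon(b_l))-\fla(a^\epsilon(\tau)))\,d\tau$, which is again $O(L^{1+\gamma})$ by equicontinuity and the Lipschitz property of $\fla$ (Lemma~\ref{lnew}~a)). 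Summing over $l\le N-1$ and adding the single boundary block $[b_{N-1},1]$ gives $\E\mathfrak{A}^\epsilon\le 2\overline\vk(\epsilon^{-1/2}) + C\|f\|_{Lip_1}\epsilon^{\gamma/2}$, which tends to $0$; this proves i).

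For ii), the key point is that $f^\theta=f\circ\Phi_\theta$ is again in $Lip_1(h^{s_1})$ with the same Lipschitz constant and sup-norm, since $\Phi_\theta$ is an isometry of $h^{s_1}$. Moreover $\langle\langle f^\theta\rangle\rangle_\Lambda = \langle\langle f\rangle\rangle_\Lambda\circ\Phi_\theta$ and $f^\theta(\Phi_{-t\Lambda}a) = f(\Phi_{-t\Lambda}\Phi_\theta a)$, so running the argument of i) with $f^\theta$ in place of $f$ produces exactly the same bound $2\overline\vk(\epsilon^{-1/2}) + C\|f\|_{Lip_1}\epsilon^{\gamma/2}$, in which every constant depends only on $\|f\|_{Lip_1}$, $s_1$, $s_*$ and the uniform moment bounds of Assumption~A\textprime, and not on $\theta$. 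Hence the convergence in i) is uniform over $\theta\in\T^\infty$. The only place requiring a little care — the main obstacle — is verifying that the deterministic averaging estimate underlying $\Upsilon^2_l$ is genuinely available for the $\lan\lan\cdot\ran\ran_\Lambda$-averaging of a merely Lipschitz (not polynomial) $f$ with a rate $\overline\vk(T)$ that is uniform over $f\in Lip_1$; this is precisely the content of Lemma~\ref{lnew}~a) (the analog of Lemma~\ref{lem-def-av}(iv) and Corollary~\ref{c1}), so once that is invoked the rest is routine bookkeeping identical to Lemma~\ref{lem-rot}.
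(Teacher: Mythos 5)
Your overall route is exactly the paper's: the published proof consists of the single remark that one repeats the proof of Lemma~\ref{lem-rot} with Lemma~\ref{lnew} in place of Lemma~\ref{lem-def-av} (and, for ii), invokes the uniformity over the class $Lip_{\mC}$ coming from item (iv) of Lemma~\ref{lem-def-av}), and your partition of $[0,1]$, the splitting $|\eta_l|\le\Upsilon^1_l+\Upsilon^2_l$, the use of Lemma~\ref{lnew}~a) as the substitute for Corollary~\ref{c1}, the cancellation via the $\Phi_{\Lambda t}$-invariance of $\fla$ from Lemma~\ref{lnew}~b), and the observation that $f^\theta$ stays in $Lip_1$ with the same constants are precisely that argument spelled out.

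There is, however, one step that would fail as written: the claim that, because $f$ is bounded, ``there is no need to introduce the high-probability event $\Omega_M$'' and that ``the partition estimate goes through globally'', together with your final bound $2\overline\vk(\epsilon^{-1/2})+C\|f\|_{Lip_1}\epsilon^{\gamma/2}$ in which $\overline\vk$ carries no parameter. The deterministic rate you need for $\Upsilon^2_l$, namely $\big|\lan\lan f\ran\ran^{L^{-1}}_\Lambda(v)-\fla(v)\big|\le\overline\vk(L^{-1};M)$, is supplied by the analogue of Lemma~\ref{lem-def-av}(iii) contained in Lemma~\ref{lnew}~a), and that uniformity holds only for $v$ in a ball $B(M,h^{s})$ with $s>s_1$ (compact in $h^{s_1}$), not on bounded sets of $h^{s_1}$, let alone globally: the truncation index $m$ in the proof of Lemma~\ref{lem-def-av}(ii) depends on the tail of $v$ and cannot be chosen uniformly in the weaker norm. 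Since $|a^\epsilon(b_l)|_{s_*}$ is a random variable with only the moment control \eqref{ass-a2}, you must keep the cutoff $\Omega_M=\{\sup_\tau|a^\epsilon(\tau)|_{s_*}\le M\}$ to apply the uniform rate $\overline\vk(L^{-1};M)$ on it; the boundedness of $f$ only simplifies the complementary estimate to $\E_{\Omega_M^c}\mathfrak A^\epsilon\le 2\|f\|_\infty\PP(\Omega_M^c)\le\vk(M)$ (no Cauchy--Schwarz or growth condition needed, which is indeed the simplification you are entitled to). The correct conclusion of your computation is therefore $\E\,\mathfrak A^\epsilon\le\vk(M)+\vk(\epsilon^{-1};M)$, and one finishes, as in Lemma~\ref{lem-rot}, by choosing first $M$ large and then $\epsilon$ small; this two-parameter structure also propagates into ii), where the $\theta$-independence of the rate follows, as you say, from $f^\theta\in Lip_1$ together with the uniformity over the class $Lip_{\mC}$ (item (iv) of Lemma~\ref{lem-def-av} via Lemma~\ref{lnew}~a)). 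With that repair your argument coincides with the intended proof.
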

\begin{proof}
To get i) we literally  repeat the proof of Lemma \ref{lem-rot},  using Lemma~\ref{lnew} instead of Lemma~\ref{lem-def-av}. 
The assertion ii) follows from  Lemma~\ref{lnew} and item (iv) of Lemma~\ref{lem-def-av}. 
\end{proof}

\section{Proof of Theorem~\ref{t.stat}}\label{sec-r2}

Let  $v^\epsilon(\tau),\, 0\le \tau \le1 $, be a stationary solution for Eq.~\eqref{y1} such that 
$\cD( v^\epsilon(\tau))\equiv \mu^\epsilon$, 
and let $a^\epsilon (\tau) = \Phi_{\epsilon^{-1} \Lambda\tau} v^\epsilon(\tau)$ be its interaction representation.  Since
$v$ inherits the a-priori estimate \eqref{ass-a3} (with $u_0=0$), then an analogy of the convergence \eqref{z5}
holds for a suitable sequence $\epsilon_l\to0$. 
The argument from the proof of Proposition~\ref{prop1} applies and imply that 
\be\label{9.1}
\begin{split}
\cD(a^{\epsilon_l}(\cdot)) \strela \cD (a^0(\cdot))\quad \text{in}\quad X^{s_1}\quad\text{as}\quad
\epsilon_l\to0\,,
\end{split}
\ee
where $a^0$ is a weak solution of \eqref{feq-ef1}. We may also assume that 
\be\label{9.11}
\begin{split}
\mu^{\epsilon_l} \strela \bar \mu^0\quad \text{in}\quad h^{s_1}\,,
\end{split}
\ee
for some measure $\bar \mu^0$.

Let us take any $f\in Lip_1(h^{s_1})$. Then 
$$
\E \int_0^1 f(v^\epsilon (\tau))\,d\tau =\E \int_0^1 f(\Phi_{-\epsilon^{-1}\Lambda\tau}a^\epsilon(\tau))
\,d\tau\,.
$$
Applying to the second integral Lemma \ref{l62} we find that 
\be\label{9.2}
 \int_0^1 \E f(v^\epsilon (\tau))\,d\tau = \int_0^1 \E \fla
(a^\epsilon(\tau)) \,d\tau + \varkappa(\epsilon^{-1})\,.
\ee
Since the function $\fla$ is invariant with respect to transformations $\Phi_{\Lambda t}$, $t\in\R$
(see item b) of Lemma~\ref{lnew}), 
then $\fla (a^\epsilon(\tau)) = \fla(v^\epsilon(\tau))$. So both integrands in \eqref{9.2} are independent from 
$\tau$, and 
\be\label{9.3}
  \E f(v^\epsilon (\tau))=  \E \fla
(a^\epsilon(\tau))  + \varkappa(\epsilon^{-1})\quad \forall\,\tau\,.
\ee
Now let us take for $f$ the function
$\tilde f = \tilde f_{\epsilon^{-1}\tau}
=f\circ\Phi_{\epsilon^{-1} \Lambda\tau}$ (which also belongs to $Lip_1(h^{s_1})$). Then 
$$
\E f(a^\epsilon(\tau)) =\E \tilde f(v^\epsilon(\tau)) =\E \lan\lan \tilde f\ran\ran_\Lambda(a^\epsilon(\tau))+ \varkappa(\epsilon^{-1})
=\E\fla (a^\epsilon(\tau)) +
 \varkappa(\epsilon^{-1})\,,
$$
where $\varkappa$ may be chosen the same for all functions $\tilde f$ in 
view of Lemma~\ref{l62}\,i).
Comparing this with \eqref{9.3} and using \eqref{9.11}  we find that 
$$
\E f(a^{\epsilon_l}(\tau))\strela \lan f,\bar \mu^0\ran\quad\text{as}\quad \epsilon_l\to0, 
$$
for each $\tau$. Therefore, in virtue of \eqref{9.1},
$\cD(a^0(\tau))\equiv\bar \mu^0$. So $a^0(\tau)$ is a stationary solution 
for \eqref{feq-ef1}, and $\bar \mu^0$ is a stationary measure for this
equation. Since the latter is unique, $\bar \mu^0\equiv \mu^0$, and
\eqref{9.11} implies the convergence \eqref{conv1}.

Replacing in \eqref{9.3} $f$ by $\tilde f_t$ and using Lemma~\ref{lnew}\,b) we see that 
$$
\lan f,\Phi_{\Lambda t}\circ\mu^\epsilon\ran = \lan \tilde f_t,\mu^\epsilon\ran = \lan f,\mu^\epsilon\ran+\varkappa(\epsilon^{-1}). 
$$
Passing to the limit as $\epsilon\to0$ we get the claimed invariance of the measure $\mu^0$. Finally, the last 
  assertion  immediately follows from \eqref{conv1}.
  $\qed$
\bigskip

\appendix
\section{}
Consider the  CGL equation  \eqref{dcdse}, 
where $\mathcal{P}: \mathbb{C}^{d(d+1)/2+d+1}\times \TTT\to\mathbb{C}$ is a $C^{\infty}$-smooth function. We write it in the $v$-variables and 
 slow time $\tau=\epsilon t$:
\[
\dot{v}_k+\epsilon^{-1}i\lambda_kv_k=P_k(v),\quad k\in\mathbb{N},
\]
where 
\[P(v):=(P_k(v),k\in\mathbb{N})=\Psi(\mathcal{P}(\nabla^2u,\nabla u, u,x)), \; \;\; u=\Psi^{-1}v,\]
and  introduce the effective equation 
\begin{equation}\label{appendix-ef}
\dot{\tilde{a}}=\langle P\rangle_{\Lambda}(\tilde{a}).
\end{equation}
By Lemma \ref{lem-smooth}  $P$ defines smooth locally Lipschitz 
mappings $h^s\to h^{s-2}$ for $s>2+d/2$. So by a version 
of Lemma~\ref{lem-r-lip}, $\langle P\rangle_{\Lambda}\in Lip_{loc}(h^s; h^{s-2})$
for $s>2+d/2$.
Assume that

\noindent{\bf Assumption E}: {\it There exists $s_0\in(d/2,n]$ such that the effective equation 
\eqref{appendix-ef} is locally  well posed in the Hilbert spaces $h^s$, with $s\in [s_0, n]\cap\N$.}
\medskip

Let $ u^{\epsilon}(t,x)$ be a solution of Eq.~\eqref{dcdse} with initial datum $u_0\in H^s$, $v^{\epsilon}(\tau)=\Psi(u(\epsilon^{-1}\tau, x))$, and $\tilde{a}(\tau)$ be a solution of Eq.\ \eqref{appendix-ef}  with initial datum $\Psi(u_0)$. Then we have the following result:
\begin{theorem}\label{thm-appendix}
If Assumptions A and E hold and $s>\max\{s_0+2, d/2+4\}$, then the solution of the 
effective equation exists for $0\leqslant \tau\leqslant T$, and for any $s_1<s$ we have
$$
I(v^{\epsilon}(\cdot))
\xrightarrow[\epsilon\to0] {}
I(\tilde{a}(\cdot))\quad \text{in}\quad C([0,T], h_I^{s_1}).\label{converge-A}
$$
\end{theorem}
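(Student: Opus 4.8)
The plan is to mimic the proof of Theorem~\ref{m-theorem} almost verbatim, with the only substantive change being a bookkeeping of derivative orders: the nonlinearity $\mathcal{P}(\nabla^2 u,\nabla u,u,x)$ now loses two derivatives rather than one, so $P(\cdot)\in Lip_{loc}(h^s;h^{s-2})$, and correspondingly $\langle P\rangle_\Lambda\in Lip_{loc}(h^s;h^{s-2})$ by the appropriate analogue of Lemma~\ref{lem-r-lip}. First I would pass to the interaction representation $a^\epsilon(\tau)=\Phi_{\epsilon^{-1}\Lambda\tau}v^\epsilon(\tau)$, which by Assumption~A lives in $B(M_1,h^s)$ on $[0,T]$ for a suitable $M_1$, and satisfies $\dot a^\epsilon=Y(a^\epsilon,\epsilon^{-1}\tau)$ with $Y(a,t)=\Phi_{t\Lambda}P(\Phi_{-t\Lambda}a)$. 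The a-priori bound gives $|a^\epsilon|_{X^s}\le M_1$ and $|\dot a^\epsilon|_{X^{s-2}}\le C(M_1)$; interpolation then yields, for $s-2<\bar s<s$, a uniform modulus-of-continuity estimate $|a^\epsilon(\tau_2)-a^\epsilon(\tau_1)|_{\bar s}\le C(M_1)(\tau_2-\tau_1)^\gamma$ exactly as in \eqref{new2}. This is why the theorem needs $s>s_0+2$ (room for the effective equation's two-derivative loss, with target regularity still above $s_0$) and $s>d/2+4$ (so that $\bar s$ and $s_1$ can be taken above $d/2+2$, keeping all the $Lip_{loc}(h^\cdot;h^{\cdot-2})$ estimates valid).

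Next I would prove the key averaging lemma: for $s-2<s'<s$ with $s'>d/2+2$,
\[
\Big|\int_0^{\tilde\tau}\big(Y(a^\epsilon(\tau),\epsilon^{-1}\tau)-R(a^\epsilon(\tau))\big)\,d\tau\Big|_{s'}\le\delta(\epsilon,M_1)\to0,
\]
where $R=\langle P\rangle_\Lambda$. The argument is the one in Lemma~\ref{rnls-lem-apr}: split $[0,T]$ into $\sim\epsilon^{-1/2}$ subintervals of length $L=\epsilon^{1/2}$; on each $[b_l,b_{l+1}]$ freeze the slow argument at $b_l$, controlling the replacement error by the Lipschitz estimate and \eqref{new2}; on the frozen argument, a change of variables turns $\epsilon\int_0^{\epsilon^{-1}L}Y(a(b_l),\epsilon^{-1}b_l+s)\,ds$ into $L\,\Phi_{\Lambda\epsilon^{-1}b_l}\langle P\rangle_\Lambda^{L^{-1}}(\Phi_{-\Lambda\epsilon^{-1}b_l}a(b_l))$, which by the analogue of Corollary~\ref{c1} and the commutation $R\circ\Phi_{\Lambda t}=\Phi_{\Lambda t}\circ R$ equals $L\,R(a(b_l))+\varkappa_1(L^{-1})$ with $|\varkappa_1(L^{-1})|_{s'}\le\bar\varkappa(L^{-1};M_1)$. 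Summing over the $\sim\epsilon^{-1/2}$ blocks gives a bound $2\bar\varkappa(\epsilon^{-1/2};M_1)+C\epsilon^{\gamma/2}+C\epsilon^{1/2}$, which tends to $0$.

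With this in hand I would run the compactness argument: by Arzel\`a--Ascoli the family $\{a^\epsilon\}$ is precompact in $X^{s_1}$ for every $s_1<s$, any limit $a^0$ of a subsequence satisfies (by Fatou) $|a^0(\tau)|_s\le M_1$, and by the averaging lemma $a^0$ is a mild solution of the effective equation \eqref{appendix-ef} written as $\dot a=\langle P\rangle_\Lambda(a)$, i.e. $a^0(\tau)-a^0(0)=\int_0^\tau\langle P\rangle_\Lambda(a^0(s))\,ds$ in $h^{s_1-2}$. Introducing the stopping time $\tilde T=\min\{\tau:|\tilde a(\tau)|_s\ge M_1+1\}$ and invoking Assumption~E (local well-posedness, hence uniqueness) we get $a^0=\tilde a$ on $[0,\tilde T]$; the a-priori bound $|a^0|_s\le M_1$ forces $\tilde T=T$, so $\tilde a\in X^s$ and the whole family converges, $a^\epsilon\to\tilde a$ in $X^{s_1}$; uniformity in $u_0\in B(M_0,H^s)$ follows by the same contradiction-plus-compactness argument as in Section~\ref{s-p-m}. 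Finally $|I(a)-I(\tilde a)|^\sim_{s_1}\le 4|a-\tilde a|_{s_1}(|a|_{s_1}+|\tilde a|_{s_1})$ converts convergence of $a^\epsilon$ to convergence of the actions, and since $I(a^\epsilon)=I(v^\epsilon)$ this is the claim. The main obstacle — really the only new point — is checking that all the functional-analytic estimates survive the shift from one to two lost derivatives; this is where the hypotheses $s>s_0+2$ and $s>d/2+4$ are consumed, and one must verify that the intermediate exponents $\bar s,s'$ can be chosen in the nonempty window $(\max\{d/2+2,s-2\},s)$ so that Lemma~\ref{lem-smooth}, \eqref{lip-p-v}'s analogue, and the analogue of Lemma~\ref{lem-r-lip} all apply.
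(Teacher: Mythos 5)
Your proposal is correct and is essentially the paper's own argument: the authors prove Theorem~\ref{thm-appendix} by repeating the proof of Theorem~\ref{m-theorem} (interaction representation, the averaging Lemma~\ref{rnls-lem-apr} on blocks of length $\epsilon^{1/2}$, Arzel\`a--Ascoli plus identification of the limit as a mild solution, with Assumption~E supplying the well-posedness/uniqueness that the parabolic term provided before), only adjusting the derivative count from one to two lost derivatives, which is exactly how you consume the hypotheses $s>s_0+2$ and $s>d/2+4$. No substantive difference from the paper's route.
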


The proof of this theorem  follows  that of Theorem \ref{m-theorem}, with slight modifications. 
Cf. \cite{hgdcds}, where the result is proven for the non-resonant case. 

\bibliography{WNPDES}

\end{document}